\newcommand{\p}{\bm{\mu}}      
\newcommand{\pdim}{n_{\mu}}         
\newcommand{\nx}{n_x}      
\newcommand{\ny}{n_y}
\newcommand{\udim}{n_u}
\newcommand{\rank}{\mathrm{rank}}
\newcommand{\msig}{\p_{\sigma}}
\newcommand{\psig}{p_{\sigma}}
\newcommand{\Psig}{P_{\sigma}}
\newcommand{\Qsig}{Q_{\sigma}}
\newcommand{\sigSet}{\sigma \in \Sigma}
\newcommand{\word}{\sigma_{1}\sigma_{2}\cdots \sigma_{k}}
\newcommand{\wordSet}[1]{w \in \Sigma^{#1}}
\newcommand{\expect}[1]{E\left[#1 \right]}
\newcommand{\filt}{\mathcal{F}_{t}^{\p,-}}
\newcommand{\filtp}{\mathcal{F}_{t}^{\p,+}}
\newcommand{\filtr}{\mathcal{F}_{t}^{\mathbf{r}}}
\newcommand{\uw}[1]{\p_{#1}}
\newcommand{\zwy}{\mathbf{z}^{\mathbf{y}}_{w} }
\newcommand{\zwr}{\mathbf{z}^{\mathbf{r}}_{w} }
\newcommand{\zvr}{\mathbf{z}^{\mathbf{r}}_{v} }
\newcommand{\zsr}{\mathbf{z}^{\mathbf{r}}_{s} }
\newcommand{\zsigr}{\mathbf{z}^{\mathbf{r}}_{\sigma}}
\newcommand{\timeset}{t \in \mathbb{Z}}
\newcommand{\yb}{\mathbf{y}}
\newcommand{\vb}{\mathbf{v}}
\newcommand{\eb}{\mathbf{e}}
\newcommand{\xb}{\mathbf{x}}
\newcommand{\ub}{\mathbf{u}}
\newcommand{\z}{\mathbf{z}}
\renewcommand{\r}{\mathbf{r}}
\newcommand{\Bs}{G}   
\newcommand{\covseq}{\Psi_{\mathbf{y}}}
\newtheorem{Notation}{Notation}
\newtheorem{Definition}{Definition}
\newtheorem{Example}{Example}
\newtheorem{Theorem}{Theorem}
\newtheorem{Corollary}{Corollary}
\newtheorem{Lemma}{Lemma}
\newtheorem{Remark}{Remark}
\newcommand{\btheta}{\boldsymbol{\theta}}
\newcommand{\GBS}{\textbf{GBS}}
\newcommand{\y}{\textbf{y}}
\newcommand{\bu}{\textbf{u}}
\newcommand{\x}{\textbf{x}}
\newcommand{\bmu}{\bm{\mu}}
\newcommand{\e}{\textbf{e}}
\newcommand{\IM}{\bm{Im}}
\newcommand{\AQ}{\mathbb{I}_{0,n_p}}
\title{ On minimal LPV state-space representations in innovation form: an algebraic characterization}
\author{Elie Rouphael, Mihaly Petreczky,  Lotfi Belkoura
\thanks{E. Rouphael, M. Petreczky and L. Belkoura are with the Centre de Recherche en Informatique, Signal et Automatique de Lille, UMR CNRS 9189, Villeneuve dAscq 59651, France
{\tt\small : \{elie.rouphael,mihaly.petreczky, lotfi.belkoura\}@univ-lille.fr},
}}
\begin{document}
\maketitle
\begin{abstract}
 In this paper a definition of the concept of minimal state-space representations in innovation form for LPV is proposed. We also present algebraic conditions for a stochastic LPV state-space representation to be minimal in forward innovation form and discuss an algorithm for transforming any stochastic LPV state-space representation to a minimal one
 in innovation form.
\end{abstract}

\section{Introduction}
\label{sect:intro}
Identification  of \emph{Linear Parameter-Varying} (LPV) models has gained significant attention, see \cite{bagi,laurainrefined,MEJARIAuto,pigalpv,fewive,tanelliidentification,Wingerden09,veve,toth} and the references therein.
In particular, there is a rich literature on subspace identification of LPV state-space representations, see for instance \cite{Wingerden09,fewive,Verdult02,veve,CoxTothSubspace,RamosSubspace} and the references therein.

Despite these advances, the theoretical analysis of 
system identification algorithms, especially subspace methods,  for stochastic LPV state-space representations remains challenging. 
As the history of LTI system identification indicates \cite{LindquistBook,Katayama:05,CHIUSO2005377}
that such a theoretical analysis requires a good understanding of the notion of minimal state-space representations in innovation form \cite{LindquistBook,Katayama:05}. 
The latter notion is not yet fully understood for LPV state-space representations. 

\paragraph*{\textbf{Contribution}}
In this paper LPV state-space representations with affine dependence on parameters (abbreviated by \emph{LPV-SSA}) are considered.
We restrict attention to autonomous (without control input) stationary stochastic LPV-SSAs (\emph{asLPV-SSA} for short). 
\emph{The main technical contributions are new algebraic conditions for 
an asLPV-SSA to be minimal in innovation form}.
These conditions depend only the the matrices of the system representation. 
In order to present these results, the paper
also provides a \emph{systematic overview of the results
          on existence and minimality of asLPV-SSA in innovation form}.
          These results can be derived 
          from realization theory of stochastic bilinear
          systems \cite{PetreczkyBilinear}, but they have not been stated
          explicitly for LPV-SSAs.
          In particular, we state that any asLPV-SSA can be converted to a 
          minimal asLPV-SSA in innovation form while preserving the output. Moreover, any two minimal asLPV-SSA in innovation form are isomorphic, if they have the same output

\paragraph*{\textbf{Motivation for studying asLPV-SSA}}
Under suitable technical assumptions, any stochastic LPV-SSA can be decomposed into a noiseless deterministic LPV-SSA which is driven only by the control input, and asLPV-SSA driven by the noise, \cite{MejariLPVS2019}. 
Moreover, the identification of these two subsystems can be carried out separately \cite{MejariLPVS2019}.
Note that minimality and uniqueness of deterministic
LPV state-space representations is well understood \cite{PetreczkyLPVSS}.
Hence, in order to understand the notion of minimality and innovation representation for stochastic LPV-SSA with control input, the first step is to understand these notions for asLPV-SSA. 

\paragraph*{\textbf{Motivation for minimal LPV-SSAs in innovation form}}
In the formulation of the identification problem for LPV-SSA
\cite{Wingerden09,fewive,Verdult02,veve,CoxTothSubspace}
the stated goal is usually to find an LPV-SSA which is isomorphic\footnote{Often, it is required that the isomorphism does not depend on the scheduling.}, 
or at least
which is input-output equivalent to the data generating system. 
However, in general there may exist LPV-SSAs which generate the same
output for some scheduling signal, but which are not
isomorphic (see Example \ref{sim:examp2} in Section \ref{sect:examp} or \cite{TothKulcsar}), or which are not input-output equivalent.
In the latter case the systems generate different outputs when the scheduling
signal is changed. See Example \ref{sim:examp} and Example \ref{sim:examp2}
of Section \ref{sect:examp} of the present paper.


This implies that in general the system identification problem
is ill-posed.  However, it becomes well-posed, if we add the assumption that the underlying data generating system is a minimal asLPV-SSA in innovation form. 

Indeed, if the observed output has an asLPV-SSA representation, it has a minimal one in innovation form, and all such representations are related by a constant
isomorphism.
The necessity of this assumption is illustrated by the examples in Section \ref{sect:examp}.
From \cite{MejariCDC2019} it follows
that this assumption is sufficient, as the identification algorithm in \cite{MejariCDC2019} returns a minimal asLPV-SSA in innovation form. 
We conjecture that the same will be true for
most of the existing subspace identification algorithms  \cite{Wingerden09,fewive,Verdult02,veve,CoxTothSubspace,RamosSubspace}.


To deal only with minimal 
asLPV-SSA representations in innovation form, simple conditions to check minimality and being in innovation form are needed.
The latter is necessary in order to check if the elements of a
parametrization of asLPV-SSAs are minimal and in innovation form, or to construct such parametrizations.

\paragraph*{\textbf{Related work}}
As it was mentioned above, there is a rich literature on 
subspace identification methods for stochastic LPV-SSA representations \cite{RamosSubspace,FavoreelTAC,CoxTothSubspace,Wingerden09}. However, the cited papers do not deal with the problem
of characterizing minimal stochastic 
LPV state-space representations in innovation form. 
In \cite{CoxLPVSS,CoxTothSubspace} the existence of an LPV state-space representation in innovation form was studied, but
due to the specific assumptions (deterministic scheduling) and the
definition of the innovation process, the resulting LPV state-space
representation in innovation form had dynamic dependence on the
scheduling parameters. Moreover, \cite{CoxLPVSS,CoxTothSubspace}
do not address the issue of minimality of the stochastic part of LPV state-space representations. 

This paper uses realization theory of stochastic generalized bilinear systems (\emph{\GBS\ } for short) of \cite{PetreczkyBilinear}. In particular,
asLPV-SSAs correspond to \GBS{s}. The existence and uniqueness of minimal asLPV-SSA{s} in innovation form
follows from the results of \cite{PetreczkyBilinear}. 
The main novelty of the present paper with respect to \cite{PetreczkyBilinear} is 
the new algebraic characterization of minimal asLPV-SSA{s} in innovation form, and 
that the results on   existence and uniqueness of minimal \GBS{s} are spelled out explicitly for LPV-SSAs.

The paper \cite{MejariCDC2019} used the correspondence
between \GBS{s} and asLPV-SSA{s} to state existence and uniqueness
of minimal asLPV-SSA{s} in innovation form. However, \cite{MejariCDC2019} did not provide an algebraic characterization of minimality or innovation form.  
Moreover, it considered only scheduling signals which were zero mean white noises. In contrast, in this paper more general scheduling signals are considered. 
The present paper is complementary to \cite{MejariCDC2019}. This paper
explains when the assumption that the data generating system is minimal asLPV-SSA  in innovation form could be true, while \cite{MejariCDC2019} presents an identification algorithm which is statistically consistent under the latter assumption.

\textbf{Outline of the paper}
In Section \ref{sect:prelim} we introduce the notations used and we recall \cite{PetreczkyBilinear}, some technical assumptions  which are necessary to define of the stationary LPV-SSA representation.
In Section \ref{sect:min} some principal results on minimal asLPV-SSA{s} in innovation form are reviewed. 
In Section \ref{sect:main} we present the main results of the paper, namely,
algebraic conditions for an asLPV-SSA to be minimal in innovation form.
Finally, in Section \ref{sect:examp} numerical examples are developed
to illustrate the contributions.

\section{Preliminaries}
\label{sect:prelim}
In the sequel, we will use the standard terminology of probability theory \cite{Bilingsley}. In particular, all the random variables and stochastic processes are understood w.r.t. to a fixed probability space $\left(\Omega, \mathcal{F}, \mathcal{P}\right)$, where $\mathcal{F}$ is a $\sigma$-algebra over the sample space $\Omega$.
The expected value of a random variable $\mathbf{r}$ is denoted by $E[\mathbf{x}]$ and conditional expectation w.r.t. $\sigma$- algebra $\mathcal{F}$ is denoted by $\expect{\mathbf{r} \mid \mathcal{F}}$. 
All the stochastic processes in this paper are discrete-time ones defined over the time-axis $\mathbb{Z}$ of the set of integers:
a stochastic process $\r$ is a collection of random variables
$\{\r(t)\}_{t \in \mathbb{Z}}$ taking values in some set $X$.

Next, we define the class of systems studied in this paper. An \emph{autonomous stochastic 
linear parameter-varying state-space representation with affine 
dependence on scheduling parameter (aLPV-SSA)} a is system described by
 \begin{equation}
	\label{eq:aslpv}
            \mathcal{S} \left\{     \begin{aligned}
                 &  \xb(t+1)=\sum_{i=1}^{\pdim} (A_i\xb(t)+K_i\vb(t))\p_i(t) \\
                 & \y(t)=C\xb(t)+F\vb(t)
		\end{aligned}\right.
               \end{equation}
           where 
	$A_{\sigma} \in \mathbb{R}^{n \times n}, K_{\sigma} \in \mathbb{R}^{n \times m}$,
	$C \in \mathbb{R}^{p \times n}$, $F \in \mathbb{R}^{n_y \times m}$,
	and $\xb$ is the state process, $\p=\begin{bmatrix} \p_1,\ldots,\p_{\pdim} \end{bmatrix}^T$ is the 
	scheduling process,
	$\vb$ is the noise process and $\y$ is the output process.
	
	Note that all the involved processes are defined for both negative and positive time. This may create technical problems for
	the existence of a solution and the role of initial state. 
	In this paper we will circumvent this problem by considering
	LPV-SSA which are mean-square stable in a suitable sense and
	state process of which is stationary. In order to define this
	class of LPV-SSAs we will have to recall from \cite{PetreczkyBilinear} some notation and terminology.


\subsection{Admissible scheduling and wide-sense stationarity (ZWSII) w.r.t. scheduling}
\label{Subsec:assumptions1}
 Below we define the concept of admissible input processes and ZWSII processes w.r.t. scheduling. These concepts will be used to define the class of aLPV-SSAs
 we will work with. 
 To this end, we will need the following notation from automata theory which will be used for other 
 purposes too
\begin{Notation}[Sequences over $\Sigma$]
	Consider the finite set
	\[ \Sigma = \{1, \ldots, \pdim\}. \]
A \emph{non empty word} over $\Sigma$ is a finite sequence of letters, i.e., $w = \sigma_{1}\sigma_{2}\cdots \sigma_{k}$, where $0 < k \in \mathbb{Z}$, $\sigma_{1}, \sigma_{2}, \ldots, \sigma_{k} \in \Sigma$. The set of \emph{all} nonempty words is denoted by $\Sigma^{+}$. We denote an \emph{empty word} by $\epsilon$. Let $\Sigma^{*} = \epsilon \cup \Sigma^{+}$. The concatenation of two nonempty words $v = a_{1}a_{2}\cdots a_{m}$ and  $w= b_{1}b_{2}\cdots b_{n}$ is defined as $vw = a_{1}\cdots a_{m} b_{1} \cdots b_{n}$ for some $m,n > 0$. Note that if $w = \epsilon$ or $v= \epsilon$,  then $v\epsilon = v$ and $\epsilon w = w$, moreover, $\epsilon \epsilon = \epsilon$. The length of the word $w \in \Sigma^{*}$ is denoted by $|w|$, and $|\epsilon| =0$. 
\end{Notation}

With the notation above, we can identify the process $\p$ with the collection 
$\{\p_{\sigma}\}_{\sigma \in \Sigma}$ of its components. 
We will say that $\p$ is an \emph{admissible scheduling process}, if the collection
$\{\p_{\sigma}\}_{\sigma \in \Sigma}$ is an admissible collection of input processes in the sense of
\cite[Definition 1]{PetreczkyBilinear} for $S=\Sigma \times \Sigma$. For the convenience of the reader, a version of \cite[Definition 1]{PetreczkyBilinear} is presented in Appendix \ref{App:def}.
Before proceeding further we will present examples of admissible scheduling sequences.
\begin{Example}[White noise  scheduling]
\label{i.i.d. input}
 The scheduling process $\p = [\p_1, \p_{2}, \ldots, \p_{\pdim}]^{T}$ is independent identically distributed (i.i.d.) such that
 for all $i,j=2,\ldots, \pdim$, $t \in \mathbb{Z}$, $\p_i(t), \p_j(t)$ are independent and
$\p_i(t)$ is zero mean, then $\p$ is admissible. 
 \end{Example}
 \begin{Example}[Discrete valued i.i.d process]
 \label{disc:input}
  Assume there exists an i.i.d process $\btheta$
  which takes its values from a finite set $\Sigma=\{1,\ldots,\pdim\}$.  Let $\p_{\sigma}(t)=\chi(\btheta(t)=\sigma)$ for all $\sigma \in \Sigma$, $t \in \mathbb{Z}$. 
  Then $\p(t)=\begin{bmatrix} \p_1(t) & \ldots & \p_{n_{\mathrm p}} \end{bmatrix}^T$ is an admissible scheduling processes. 
 \end{Example}
 For further examples, see \cite{PetreczkyBilinear}. 
 
 We assume that the scheduling process $\p$ is admissible throughout the paper. 

Furthermore, we will use the definition of a \emph{Zero Mean Wide Sense Stationary (abbreviated by  ZMWSII) process with respect to $\p$} from \cite[Definition 2]{PetreczkyBilinear}.
For the convenience of the reader this definition is reformulated in Appendix \ref{App:def} as Definition \ref{def:ZMWSSI}.

In order to explain the intuition behind these definitions, 
and because we will use them later on,
we define the following products of scheduling variables along a sequence from $\Sigma^{+}$.
For every word $\wordSet{+}$ where $w=\word$, $k \geq 1$, $\sigma_{1},\ldots, \sigma_{k} \in \Sigma$, we define the  process $\uw{w}$ as follows:
\begin{equation}
\label{eqn:uw}
		\uw{w}(t) = \uw{\sigma_{1}}(t-k+1)\uw{\sigma_{2}}(t-k+2)\cdots\uw{\sigma_{k}}(t)  
\end{equation}
For an empty word $w= \epsilon$, we set $\uw{\epsilon}(t)=1$. 
%

If $\p$ is admissible, then $\uw{w},\uw{v}$ are jointly wide-sense stationary. Moreover, $\uw{w}(t)$ and $\uw{v}(t)$ are uncorrelated, if the last letters of $w$ and
$v$ are different, and 
$$E[\uw{w\sigma}^2(t)]=p_{\sigma} E[\uw{w}^2(t)]$$ 
for any $w \in \Sigma^{+}$, $\sigma \in \Sigma$. 
That is, $\p$ uniquely determines a collection of numbers $\{p_{\sigma}\}_{\sigma \in \Sigma}$. This latter collection will play an important role in the sequel. 
In particular, for $\p$ from Example \ref{i.i.d. input}, 
$p_i$ is the variance of $\p_i$, and for $\p$ from Example \ref{disc:input},
$p_i$ is the probability  $P(\theta(t)=i)$, for all $i \in \Sigma$. 

In order to explain the significance of 
these assumptions, let $\r$ be a stochastic ZMWSII process w.r.t. $\p$,
and because we will use them latter on, we define the following products. 
Let $\{p_{\sigma}\}_{\sigma \in \Sigma}$  be the constants determined by $\p$ as explained above, and
define the products
\begin{equation}
	\begin{aligned}\label{eqn:uw1}
		p_w&=p_{\sigma_1}p_{\sigma_2} \cdots p_{\sigma_k}.
	\end{aligned}
\end{equation}
For an empty word $w= \epsilon$, we set $p_{\epsilon}=1$.
For a stochastic process $\mathbf{r} \in \mathbb{R}^{\udim}$ and for each $\wordSet{*}$ we define the stochastic process $\zwr$ as
\begin{equation}\label{eqn:zwu}
	\zwr(t) = \mathbf{r}(t-|w|) \uw{w}(t-1)\frac{1}{\sqrt{p_{w}}},
\end{equation}
where $\uw{w}$ and $p_w$ are as in \eqref{eqn:uw} and \eqref{eqn:uw1}. For $w=\epsilon$, $\zwr(t)=\r(t)$.
The process $\zwr$ in \eqref{eqn:zwu} is interpreted as the product of the \emph{past} of $\r$ and $\p$.
The process $\zwr$ will be used as predictors for future values of $\r$ for various choices of $\r$. 

Now we will explain the motivation for the concept of
ZMWSII. Assume that  $\r$ is ZMWSII w.r.t. $\p$. Then $\zwr(t)$ is wide-stationary and 
square-integrable for all $w \in \Sigma^{+}$. Moreover,
the covariances $\expect{\zwr(t)(\zvr(t))^{T}}$ do not depend on $t$. Furthermore
$\zwr(t)$, $\zvr(t)$ are orthogonal, if $w$ is not a suffix of $v$ or vice versa.
Recall that we say that $w$ is a suffix of $v$, if $v=sw$ for some $s \in \Sigma^{*}$.
Moreover, when $w$ is suffix of $v$, then 
\begin{align*}
		& \expect{\zwr(t)(\zvr(t))^{T}} = \left\{ \begin{array}{ll} 
		\expect{\r(t)(\zsr(t))^{T}} & \text{if } v = sw \\
	\expect{\zsigr(t)(\zsigr(t))^{T}} & \text{if } v=w = \sigma s \\
		\end{array} \right. \\
\end{align*}
That is, $\expect{\zwr(t)(\zvr(t))^{T}}$ depends only on the difference
of $w$ and $v$, i.e., on the prefix of $v$.  
This can be viewed as a generalization of wide-sense stationarity,
if the index $w$ and $v$ are viewed as additional multidimensional time instances, and
$\Sigma^{*}$ is viewed as an additional time-axis.



\subsection{Stationary LPV-SSA representation}
\label{Subsec:assumptions3}
After identifying the necessary process properties and notations, we are now ready to present the definition of the \emph{stationary autonoumous stochastic LPV-SSA}.
\begin{Definition}\label{defn:LPV_SSA_wo_u}
	A \emph{stationary autonomous stochastic LPV-SSA}, abbreviated as \emph{asLPV-SSA}, 
is a system of the form \eqref{eq:aslpv}, such that:
      
	\textbf{(1)}
		$\begin{bmatrix} \xb^T & \vb^T \end{bmatrix}^T$  is a ZMWSSI process, and 
		for all $\sigSet$, $ w \in \Sigma^{+}$,
        $E[\z^{\xb}_{\sigma}(t)(\z^{\vb}_{\sigma}(t))^T]=0, ~ E[\vb(t)(\z^{\xb}_w(t))^T]=0.$

	\textbf{(2)}
		$\vb$ is ZMWSSI and $E[\vb(t)(\z^{\vb}(t))^T]=0$ for all $w \in \Sigma^{+}$.
	
	\textbf{(3)}
		The eigenvalues of the matrix $\sum_{\sigSet} \psig A_{\sigma} \otimes A_{\sigma}$ are inside the open unit circle.
\end{Definition}
Note that, condition \textbf{(2)} implies that v is a white noise.

In the terminology of \cite{PetreczkyBilinear}, an asLPV-SSA corresponds to a stationary \GBS\  w.r.t. inputs $\{\p_{\sigma}\}_{\sigma \in \Sigma}$. 
Note that the processes $\xb$ and $\yb$ are ZMWSII, in particular, they
are wide-sense stationary, and that $\xb$ is orthogonal to the future values of the noise process $\vb$. We should mention that we concentrate on wide-sense stationary processes, because it is difficult to estimate the distribution of non-stationary processes. Also, wide-sense stationary processes solve the problem of the initial state conditions.

The state of an asLPV-SSA
is uniquely determined by its matrices and noise process. In order to
present this relationship, we need the following notation.
\begin{Notation}[Matrix Product]\label{not:product}
	Consider $n \times n$ square matrices $\{A_{\sigma}\}_{\sigSet}$. For any word $\wordSet{+}$ of the form $w = \sigma_{1}\sigma_{2}\cdots\sigma_{k}$, $k\!>\!0$ and $\sigma_{1}, \ldots, \sigma_{k} \in \Sigma$, we define 
	$$A_{w} = A_{\sigma_{k}}A_{\sigma_{k-1}}\cdots A_{\sigma_{1}}$$.
	For an empty word $\epsilon$, let $A_{\epsilon} = I_n$.
\end{Notation}
From \cite[Lemma 2]{PetreczkyBilinear} it follows that
\begin{equation*}
	 \xb(t)=\sum_{v \in \Sigma^{*},\sigma \in \Sigma}
	   \sqrt{p_{\sigma v}} A_{v}K_{\sigma}\z^{\vb}_{\sigma v}(t)
	%
	\\ 
\end{equation*}
where the infinite sum on the right-hand side is  absolutely convergent in the mean square sense.
This prompts us to use the following notation.
\begin{Notation}
       We identify the asLPV-SSA $\mathcal{S}$ of the form \eqref{eq:aslpv} with the tuple 
       $\mathcal{S} =(\{A_{\sigma},K_{\sigma}\}_{\sigma=1}^{\pdim},C,F,\vb)$.
\end{Notation}

Finally, we need to define what we mean by an asLPV-SSA realization of a process.
An asLPV-SSA $\mathcal{S}$ of the form \eqref{eq:aslpv} is a \emph{realization of
a pair $(\tilde{\y},\tilde{\p})$}, if $\tilde{\y}=\y$, $\tilde{\p}=\p$. 
If $\mathcal{S}$ is of the form \eqref{eq:aslpv}, then we 
call the state-space dimension $\nx$ the \emph{dimension} of $\mathcal{S}$ and we denote it by
$\dim \mathcal{S}$.
We say that the asLPV-SSA $\mathcal{S}$ is a \emph{minimal} realization of $(\tilde{\y},\tilde{\p})$, if for any asLPV-SSA realization $\mathcal{S}^{'}$ of
$(\tilde{\y},\tilde{\p})$, the dimension of $\mathcal{S}^{'}$ is not smaller than $\dim \mathcal{S}$.

\section{Existence and minilality of asLPV-SSAs in innovation form}
\label{sect:min}
In this section we review the principal results on
existence and minimality of asLPV-SSA in innovation form. To this end, in  Subsection \ref{sect:real:red}  we recall from \cite{PetreczkyLPVSS} some results on realization theory of deterministic LPV-SSA. 
In Subsection \ref{sect:def:innov} we present the definition of asLPV-SSAs in innovation form, and
in Subsection \ref{subsect:min} we present results on existence and uniqueness of minimal asLPV-SSAs.
In Subsection \ref{Sect:algo} we present rank conditions for minimality of asLPV-SSAs and an algorithm for converting
any asLPV-SSA to a minimal one in innovation form. 
The results presented in this section follow from \cite{PetreczkyBilinear}, 
but they have never been formulated
explicitly for LPV-SSAs. 

\subsection{Deterministic LPV-SSA representation}
\label{sect:real:red}
Recall from \cite{PetreczkyLPVSS,CoxLPVSS} that a deterministic LPV state-space representation with affine dependence (abbreviated as \emph{dLPV-SSA})
is a system of the form:
\begin{equation}
	\label{eqn:LPV_SSA:det} 
	\mathscr{S}\left\{\begin{aligned}
		&\mathrm{x}(t+1)=\sum_{i=1}^{\pdim} (\mathcal{A}_i\mathrm{x}(t)+\mathcal{B}_i\mathrm{u}(t))\mu_i(t), ~ \\
		&\mathrm{y}(t)=\mathcal{C}\mathrm{x}(t)+\mathcal{D}\mathrm{u}(t),
	\end{aligned}\right.
\end{equation}
where $\mathcal{A}_i, \mathcal{B}_i, \mathcal{C}, \mathcal{D}$ are matrices of suitable dimensions,  
$\mathrm{x}:\mathbb{Z} \rightarrow \mathbb{R}^{n_\mathrm{x}}$ is the state trajectory
$\mathrm{u}:\mathbb{Z} \rightarrow \mathbb{R}^{n_\mathrm{u}}$ is the input trajectory
$\mathrm{y}:\mathbb{Z} \rightarrow \mathbb{R}^{n_\mathrm{y}}$ is the output trajectory with finite support.
We identify a dLPV-SSA of the form \eqref{eqn:LPV_SSA:det} with the tuple 
\begin{equation} 
\label{eqn:LPV_SSA:det_not} 
\mathscr{S}=(\{\mathcal{A}_{i},\mathcal{B}_{i}\}_{i=0}^{\pdim},\mathcal{C},\mathcal{D})
\end{equation}

The \emph{sub-Markov function} of the dLPV-SSA  $\mathscr{S}$ 
is the function $M_{\mathscr{S}}:\Sigma^{*} \rightarrow \mathbb{R}^{n_\mathrm{y} \times n_\mathrm{u}}$, such
that for all $w \in \Sigma^{*}$, 
\begin{equation}\label{eqn:sub_markov}
	M_{\mathscr{S}}(w)=\left\{\begin{array}{ll}
		\mathcal{CA}_s\mathcal{B}_{\sigma}, \ \ & w=\sigma s, \, \sigma \in \Sigma, \, s \in \Sigma^{*} \\
		\mathcal{D}.           \ \  & w=\epsilon
	\end{array}\right.
\end{equation}
The values of $\{M_{\mathscr{S}}(w)\}_{w \in \Sigma^{*}}$ are the \emph{sub-Markov parameters} of $\mathscr{S}$.
From \cite{PetreczkyLPVSS} it then follows that two dLPV-SSAs have the same input-output behavior, if and only if
their sub-Markov functions are equal. 
For a function $M:\Sigma^{*} \rightarrow \mathbb{R}^{n_\mathrm{y} \times n_\mathrm{u}}$, we will say that the dLPV-SSA $\mathscr{S}$ is a \emph{realization} of
$M$, if $M$ equals the sub-Markov function of $\mathscr{S}$, i.e., $M=M_{\mathscr{S}}$.
For dLPV-SSA \eqref{eqn:LPV_SSA:det}, we call the integer $\nx$ the \emph{dimension} of $\mathscr{S}$.
We will call a dLPV-SSA \emph{minimal}, if there exists no other dLPV-SSA $\mathscr{S}^{'}$ with a smaller dimension and with the
same sub-Markov function.
We call a dLPV-SSA $\mathscr{S}$ a \emph{minimal  realization} of a function $M:\Sigma^{*} \rightarrow \mathbb{R}^{n_\mathrm{y} \times n_\mathrm{w}}$, if 
$\mathscr{S}$ is minimal and it is a realization of $M$. 
From \cite{PetreczkyLPVSS}, it follows that a dLPV-SSA is minimal if and only if it is span-reachable and observable, and the latter
properties are equivalent to rank conditions of the extended $n$-step reachability and observability matrices \cite[Definition 1, Theorem 2]{PetreczkyLPVSS}.
Furthermore, any dLPV-SSA  can be transformed to a minimal one  with the same sub-Markov function, using Kalman decomposition \cite[Corollary 1]{PetreczkyLPVSS}\footnote{\cite[Corollary 1]{PetreczkyLPVSS} should be applied with zero initial state}.
For a more detailed discussion see \cite{PetreczkyLPVSS}. 

\subsection{Definition of asLPV-SSA in innovation form}
\label{sect:def:innov}
Next, we  define what we mean by asLPV-SSA in innovation form.
To this end, we need to introduce the following notation for orthogonal projection.
\begin{Notation}[Orthogonal projection $E_l$]
	\label{hilbert:notation}
	Recall that the set of square integrable
	random variables taking values in $\mathbb{R}$, forms a Hilbert-space with the scalar product defined as $<\mathbf{z}_1,\mathbf{z}_2>=E[\mathbf{z}_1\mathbf{z}_2]$. We denote this Hilbert-space by $\mathcal{H}_1$. 
	Let $\mathbf{z}$ be a square integrable \emph{vector-valued} 
	random variable taking its values in $\mathbb{R}^k$.  Let $M$ be a closed subspace   of $\mathcal{H}_1$. 
	By the orthogonal projection of $\mathbf{z}$ onto the subspace $M$, \emph{denoted by $E_l[\mathbf{z} \mid M]$},
	we mean the vector-valued square-integrable random variable $\mathbf{z}^{*}=\begin{bmatrix} \mathbf{z}_1^{*},\ldots,\mathbf{z}_k^{*} \end{bmatrix}^T$ such that $\mathbf{z}_i^{*} \in M$ is the orthogonal projection of the $i$th coordinate $\mathbf{z}_i$ of $\mathbf{z}$ onto $M$, as it is usually defined for Hilbert spaces. 
	Let $\mathfrak{S}$ be a subset of square integrable random variables in $\mathbb{R}^p$ for some integer $p$, and 
	suppose that $M$ is generated by the coordinates of the elements of $\mathfrak{S}$, i.e. 
	$M$ is the smallest (with respect to set inclusion) closed subspace of $\mathcal{H}_1$
	which contains the set $\{ \alpha^Ts \mid  s \in \mathfrak{S}, \alpha \in \mathbb{R}^p\}$. 
	Then instead of $E_l[z \mid M]$ we use \( E_{l}[\mathbf{z} \mid \mathfrak{S}] \). 
\end{Notation}
This said, we define the \emph{innovation process $\eb$ of $\yb$ with respect to
$\p$} as follows:
	\begin{equation} 
		\label{decomp:lemma:innov2}
		\eb(t)=\yb(t)-E_l[\yb(t) \mid \{\z^{\yb}_w (t)\}_{w \in \Sigma^{+}}]
	\end{equation}
In other words, $\eb(t)$ is the difference between the output and its projection on its past values w.r.t. the scheduling process $\p$, i.e., $\eb$ is the best predictor of $\yb$ using the product of the output and scheduling past values from \eqref{eqn:zwu}.
\begin{Definition}[asLPV-SSA in innovation form]
    An asLPV-SSA of the form \eqref{eq:aslpv} is said to be in \emph{innovation form}, if it is a realization of $(\y,\p)$, $F=I_{\ny}$,  and $\vb$ is the innovation process of $\yb$, i.e., $\vb = \eb$.
\end{Definition}

\subsection{Existence and uniqueness of minimal asLPV-SSA in innovation form}
\label{subsect:min}
Let  $\mathcal{S}$ 
be an asLPV-SSA of the form \eqref{eq:aslpv} in innovation form with $F=I_{\ny}$.
Let $\widetilde{\mathcal{S}}=(\{\widetilde{A}_i,\widetilde{K}_i,\}_{i=0}^{\pdim},\widetilde{C},I_{\ny},\eb)$
be another asLPV-SSA of $(\yb,\p)$ in innovation form.
We say that $\mathcal{S}$ and $\widetilde{S}$ are \emph{isomorphic}, if there exists a nonsingular matrix $T$ such that
\begin{equation*}
\label{lemma:min1:eq1}
\widetilde{A}_i=TA_iT^{-1}, \widetilde{K}_i=TK_i, \widetilde{C}=CT^{-1}
\end{equation*}
We will say that the process $(\yb,\p)$ is \emph{full rank}, if for all $i=1,\ldots, \pdim$, $Q_i=E[\eb(t)(\eb(t))^T\p_i^2(t)]$ is invertable. This is a direct extension of the classical notion of a full rank process. 

Furthermore, we will say
that $\yb$ is \emph{Square Integrable} process w.r.t. $\p$, 
abbreviated by \emph{SII}, if
it satisfies \cite[Definition 5]{PetreczkyBilinear}\footnote{with $\bu_{\sigma}=\p_{\sigma}$,
$\sigma \in \Sigma$ in the terminology of \cite{PetreczkyBilinear}}.
For the convenience of the reader, the definition of an SII process is presented in Appendix \ref{App:def}, Definition \ref{Defn:SII}. From \cite[Remark 2]{PetreczkyBilinear} it follows that if $\yb,\p)$ has a realization by an asLPV-SSA and $\p$ is bounded, then $\yb$ is SII.
\begin{Theorem}[Existence and uniqueness]
\label{theo:min}
 Assume that $(\yb,\p)$ has an asLPV-SSA  realization and that $\yb$ is full rank and SII w.r.t. $\p$.
It follows that:	
\begin{enumerate}
\item
\label{theo:min:1}
$(\y,\p)$ has a minimal asLPV-SSA realization in innovation form
\item
\label{theo:min:2}
Any two minimal asLPV-SSA realizations of $(\yb,\p)$ in innovation form are isomorphic.  
\end{enumerate}
\end{Theorem}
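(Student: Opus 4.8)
The plan is to deduce both statements from the realization theory of stationary generalized bilinear systems (\GBS) developed in \cite{PetreczkyBilinear}, by exploiting the correspondence already recalled in Section \ref{sect:prelim}: identifying $\p$ with the collection $\{\msig\}_{\sigSet}$, an asLPV-SSA of the form \eqref{eq:aslpv} is exactly a stationary \GBS\ driven by the inputs $\{\msig\}_{\sigSet}$. The first task is therefore to check that this correspondence is a faithful dictionary for all the notions occurring in the statement. Concretely, I would verify that: (i) Definition \ref{defn:LPV_SSA_wo_u}, i.e. the ZMWSSI conditions \textbf{(1)}--\textbf{(3)} (in particular the spectral-radius condition \textbf{(3)} on $\sum_{\sigSet}\psig A_{\sigma}\otimes A_{\sigma}$), translates precisely into the defining conditions of a stationary \GBS\ in \cite{PetreczkyBilinear}; (ii) ``asLPV-SSA realization of $(\yb,\p)$'' corresponds to ``\GBS\ realization of $\yb$ w.r.t. $\{\msig\}_{\sigSet}$'', and both the dimension and the isomorphism relation $\widetilde{A}_i=TA_iT^{-1},\ \widetilde{K}_i=TK_i,\ \widetilde{C}=CT^{-1}$ are preserved under the correspondence; (iii) the innovation process $\eb$ from \eqref{decomp:lemma:innov2} coincides with the \GBS\ innovation process of $\yb$, so that ``asLPV-SSA in innovation form'' corresponds to ``\GBS\ in innovation form''; and (iv) the full rank property of $(\yb,\p)$ (invertibility of each $Q_i=E[\eb(t)(\eb(t))^T\p_i^2(t)]$) and the SII property of $\yb$ are exactly the \GBS\ notions of full rank and square integrability with $\bu_{\sigma}=\msig$. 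Each of these is an exercise in unwinding definitions, since the notation of Section \ref{sect:prelim} was deliberately set up to mirror that of \cite{PetreczkyBilinear}.

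Once the dictionary is in place, part \ref{theo:min:1} is immediate: by hypothesis $\yb$ has a \GBS\ realization and is full rank and SII w.r.t. $\{\msig\}_{\sigSet}$, hence the stochastic realization theorem for \GBS{s} of \cite{PetreczkyBilinear} produces a minimal \GBS\ realization of $\yb$ in innovation form, and translating back through the dictionary yields a minimal asLPV-SSA realization of $(\yb,\p)$ in innovation form with $F=I_{\ny}$. For part \ref{theo:min:2}, I would again invoke \cite{PetreczkyBilinear}: two minimal \GBS\ realizations in innovation form of the same full rank, SII output process are related by a state-space isomorphism — the underlying reason being that both are span-reachable and observable, so the reachability and observability factorizations of the associated Hankel-type operator (built from the covariances $E[\zwy(t)(\zvy(t))^T]$) identify the two state spaces up to a unique nonsingular $T$ that intertwines all the system matrices. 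The dictionary then transports this $T$ to the asLPV-SSA side, giving the required isomorphism.

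Since essentially all the mathematical content is packaged inside \cite{PetreczkyBilinear}, the main obstacle is making the translation (i)--(iv) fully airtight, above all matching the two descriptions of the innovation process and confirming that conditions \textbf{(1)}--\textbf{(3)} of Definition \ref{defn:LPV_SSA_wo_u} are equivalent to the stationarity and mean-square stability hypotheses under which the \GBS\ results are established. If instead one wished to prove the theorem from scratch, the genuinely hard step would be the existence of the innovation form: one must show that the forward predictor space of $\yb$ (spanned by the $\zwy(t)$) is finite dimensional and that the associated innovation filter is itself realizable by a finite dimensional asLPV-SSA with $F=I_{\ny}$ — a bilinear/LPV spectral factorization. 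Finite dimensionality would be inherited from the finite rank of the relevant Hankel-type operator, which is finite precisely because some (not necessarily innovation) finite dimensional realization is assumed to exist, while the full rank assumption on the $Q_i$ is what guarantees that the filter has the required nondegenerate structure and that the resulting gains $K_i$ are well defined.
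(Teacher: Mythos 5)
Your proposal is correct and follows exactly the route the paper itself takes: the paper's entire proof is the one-line remark that the theorem "follows \cite[Theorem 2]{PetreczkyBilinear} by using the correspondence between \GBS\ and asLPV-SSA", which is precisely the dictionary-plus-citation argument you spell out. Your elaboration of the translation steps (i)--(iv) is in fact more explicit than what the paper provides.
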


The proof of this theorem follows \cite[Theorem 2]{PetreczkyBilinear} by
using the correspendence between \GBS\ and asLPV-SSA. 

Theorem \ref{theo:min} implies that asLPV-SSAs in innovation form have the useful property that, when they are minimal, they are unique up to isomorphism, and
assuming that the asLPV-SSA which generates the data is minimal does not restrict the class of 
possible output.
That is, for system identificaion it is preferable to consider parameterizing the elements of minimal asLPV-SSAs in innovation form. This motivates finding conditions for an asLPV-SSA to be minimal in innovation form and formulating algorithms for transforming an asLPV-SSA to a minimal one
in innovation form.  To this end, in Subsection \ref{Sect:algo} we present a rank condition 
for minimality and a minimization algorithm based on the results of \cite{PetreczkyBilinear}. 
However, the results of Subsection \ref{Sect:algo} do not allow checking that an asLPV-SSA is in innovation form.
Moreover, the algebraic conditions for minimality are difficult to apply.
Motivated by this, in Section \ref{sect:main} we present more user-friendly 
characterizations of minimality and being in innovation form.

  
 \subsection{Rank conditions and minimization algorithm}
\label{Sect:algo}
In order to present the rank conditions for minimality of asLPV-SSA and the minimization algorithm, we need
to define the \emph{dLPV-SSA associated with asLPV-SSA} as
$$\mathscr{S}_{\mathcal{S}}=(\{\sqrt{p_i} A_i,\Bs_i\}_{i=1}^{\pdim},C,I_{\ny}), $$
where  
\begin{equation}
\label{min:eq-1}
\Bs_i=\frac{1}{\sqrt{p_i}}(A_iP_iC^T + K_iQ_iF^T)
\end{equation}
and  $Q_i=E[\vb(t)\vb^T(t)\p_i^2(t)]$ for $i=1,\ldots,\pdim$, 
and $\{P_i\}_{i=1}^{\pdim}$ are the computed as follows:
 \(    P_i = \lim_{\mathcal{I} \rightarrow \infty} P_i^{\mathcal{I}} \),
 and $\{P_i^N\}_{i=1}^{\pdim}$ satisfy the following recursions:
\begin{equation}
\label{gi:comp:eq1}
    P_i^{\mathcal{I}+1}=p_{i} \sum_{j=0}^{\pdim} (A_jP_j^{\mathcal{I}}A_j^T + K_jQ_jK_j^T)
\end{equation}
with $P_i^0 = \mathbf{O}_{n_x,n_x}$ where $\mathbf{O}_{n_x,n_x}$ denotes the $n_x \times n_x$ matrix with all zero entries.
Note that the existence of the limit of $P_i^N$ when $N$ goes to infinity follows from \cite[Lemma 5]{PetreczkyBilinear}.
From \cite{PetreczkyBilinear} it follows that
the dLPV-SSA associated with an asLPV-SSA $\mathscr{S}$ represents a realization of Markov-function
$\covseq: \Sigma^{*} \rightarrow \mathbb{R}^{\ny \times \ny}$, 
$$\covseq(w) =\left\{ \begin{array}{ll} E[\yb(t) (\zwy(t))^{T} ] & w \in \Sigma^{+}\\
I_{\ny} & w=\epsilon \\ \end{array}\right.$$ 
computed from covariances of $\y$.

Then from \cite{PetreczkyBilinear} we can derive the following.
\begin{Theorem}[Rank conditions]
\label{theo:rank_cond}
 An asLPV-SSA is a minimal realization of $(\y,\p)$, if and only if the associated dLPV-SSA is minimal.
\end{Theorem}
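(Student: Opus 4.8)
The plan is to reduce the statement to the realization theory of deterministic LPV-SSA already recalled in Subsection \ref{sect:real:red}, together with the correspondence between asLPV-SSA and \GBS{s} from \cite{PetreczkyBilinear}. The key observation is that the construction $\mathcal{S} \mapsto \mathscr{S}_{\mathcal{S}}$ is dimension-preserving: the associated dLPV-SSA $\mathscr{S}_{\mathcal{S}}=(\{\sqrt{p_i}A_i,\Bs_i\}_{i=1}^{\pdim},C,I_{\ny})$ has the same state-space dimension $\nx$ as $\mathcal{S}$, and, as recalled right before the theorem, $\mathscr{S}_{\mathcal{S}}$ is always a realization of the Markov function $\covseq$ determined by the covariances of $(\yb,\p)$. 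The function $\covseq$ depends only on the pair $(\yb,\p)$, not on the particular asLPV-SSA realizing it. Hence minimality of asLPV-SSA realizations of $(\yb,\p)$ and minimality of dLPV-SSA realizations of $\covseq$ should match up exactly once we know that the dimensions of minimal realizations on the two sides coincide.

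First I would establish the ``only if'' direction: suppose $\mathcal{S}$ is a minimal asLPV-SSA realization of $(\yb,\p)$ of dimension $\nx$, but its associated dLPV-SSA $\mathscr{S}_{\mathcal{S}}$ is not minimal. Then by the Kalman-decomposition result for dLPV-SSA (\cite[Corollary 1]{PetreczkyLPVSS}) there is a dLPV-SSA $\mathscr{S}'$ of dimension $n' < \nx$ realizing $\covseq$. Using the results of \cite{PetreczkyBilinear} on realization of covariance sequences by \GBS{s} — specifically, that a realization of $\covseq$ of dimension $n'$ yields a stochastic \GBS\ (hence an asLPV-SSA) of dimension $n'$ realizing $(\yb,\p)$, under the standing hypotheses (full rank, SII) that guarantee existence of such a realization — I obtain an asLPV-SSA realization of $(\yb,\p)$ of dimension $n' < \nx$, contradicting minimality of $\mathcal{S}$. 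Conversely, for the ``if'' direction, suppose $\mathscr{S}_{\mathcal{S}}$ is minimal (dimension $\nx$) but $\mathcal{S}$ is not a minimal asLPV-SSA realization of $(\yb,\p)$; then there is an asLPV-SSA realization $\mathcal{S}'$ of dimension $n' < \nx$, whose associated dLPV-SSA $\mathscr{S}_{\mathcal{S}'}$ has dimension $n'$ and still realizes the same Markov function $\covseq$, contradicting minimality of $\mathscr{S}_{\mathcal{S}}$.

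The main obstacle, and the step that needs the most care, is the passage from a small-dimensional dLPV-SSA realization of $\covseq$ back to a small-dimensional asLPV-SSA (equivalently, stochastic \GBS) realization of $(\yb,\p)$. This is not purely algebraic: one must invoke the stochastic realization result of \cite{PetreczkyBilinear} (the analogue of constructing a forward innovation \GBS\ from a minimal deterministic realization of the covariance Markov function), and this is precisely where the hypotheses that $\yb$ is full rank and SII w.r.t. $\p$, together with the admissibility of $\p$, are used — they are what make Theorem \ref{theo:min} applicable and what guarantee that a minimal deterministic realization of $\covseq$ lifts to an asLPV-SSA of the same dimension. I would state this lifting explicitly as a lemma citing \cite{PetreczkyBilinear}, and then the equivalence in Theorem \ref{theo:rank_cond} follows by the two contradiction arguments above. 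Finally I would remark that, combined with the rank characterization of minimality for dLPV-SSA via the extended reachability and observability matrices (\cite[Theorem 2]{PetreczkyLPVSS}), this yields the announced checkable rank conditions for minimality of $\mathcal{S}$, applied to $\mathscr{S}_{\mathcal{S}}$.
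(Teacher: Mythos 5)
Your proposal is correct and follows exactly the route the paper intends: the paper gives no standalone proof of Theorem~\ref{theo:rank_cond}, deferring to \cite{PetreczkyBilinear}, but the surrounding text sets up precisely the two dimension-preserving constructions you use --- $\mathcal{S}\mapsto\mathscr{S}_{\mathcal{S}}$ realizing $\covseq$, and the lifting $\mathscr{S}\mapsto\mathcal{S}_{\mathscr{S}}$ from a minimal deterministic realization of $\covseq$ back to an asLPV-SSA realization of $(\y,\p)$ --- so your two contradiction arguments are the natural expansion of the cited result. The only caveat is that the full-rank and SII hypotheses you invoke for the lifting step are not stated explicitly in Theorem~\ref{theo:rank_cond} itself, so you are right to flag them as needed (they are the standing assumptions of Theorem~\ref{theo:min}).
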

From \cite{PetreczkyLPVSS} it follows that minimality of the associated dLPV-SSA can be checked using rank
conditions for the corresponding extended reachability and observability matrices. 
Note, however, that minimal asLPV-SSAs, realizing the same output, may not be isomorphic. In fact, in Section \ref{sect:examp}, Example \ref{sim:examp} presents a counter-example. 

Vice versa, with any
dLPV-SSA realization of $\covseq$ we can associate an asLPV-SSA in innovation form. This latter relationship
is useful for formulating a minimization algorithm. 
More precisely, consider a dLPV-SSA 
$$\mathscr{S}=(\{\hat{A}_i, \hat{G}_i\}_{i=1}^{\pdim}, \hat{C}, I_{\ny})$$
which is a minimal realization
of $\covseq$. 
Define the \emph{asLPV-SSA $\mathcal{S}_{\mathscr{S}}$ associated with $\mathscr{S}$} as 
$$ \mathcal{S}_{\mathscr{S}}=(\{\frac{1}{\sqrt{p_i}} \hat{A}_i, \hat{K}_i\}_{i=1}^{\pdim},\hat{C},I_{\ny},\eb), $$
where $\hat{K}_i=\lim_{\mathcal{I} \rightarrow \infty} \hat{K}_i^{\mathcal{I}}$, and
	$\{\hat{K}_{\sigma}^{\mathcal{I}} \}_{\sigma \in \Sigma, \mathcal{I} \in \mathbb{N}}$ satisfies the following recursion
	\begin{equation}
		\label{statecov:iter}
		\begin{split}
			& 	\hat{P}^{\mathcal{I}+1}_{\sigma} = \sum \limits_{\sigma_{1} \in \Sigma} \psig \left(\frac{1}{p_{\sigma_1}} 
			\hat{A}_{\sigma_{1}} \hat{P}^{\mathcal{I}}_{\sigma_{1} }  (\hat{A}_{\sigma_{1}})^{T}  + \hat{K}_{\sigma_{1}}^{\mathcal{I}} \hat{Q}^{\mathcal{I}}_{\sigma_{1} }  (\hat{K}^{\mathcal{I}}_{\sigma_{1}})^T  \right)\\
			& \hat{Q}^{\mathcal{I}}_{\sigma} = \psig T_{\sigma,\sigma}^{y} - \hat{C}  \hat{P}^{\mathcal{I}}_{\sigma } (\hat{C})^{T}  \\ 
			& \hat{K}^{\mathcal{I}}_{\sigma} = \left( \hat{\Bs}_{\sigma} \sqrt{\psig} - \frac{1}{\sqrt{\psig}}
			\hat{A}_{\sigma} \hat{P}^{\mathcal{I}}_{\sigma}  (\hat{C})^{T} \right) \left(  \hat{Q}^{\mathcal{I}}_{\sigma} \right)^{-1}
		\end{split}
	\end{equation}
	where $\hat{P}^{0}_{\sigma}$ is a $\nx \times \nx$ zero matrix and $T_{\sigma,\sigma}^{y} = \expect{\y(t) (\zwy(t))^{T}}$. 
From \cite{PetreczkyBilinear} it follows that
if $\mathscr{S}$ is a minimal dLPV-SSA realization of $\covseq$, then the associated asLPV-SSA $\mathcal{S}_{\mathscr{S}}$ is an asLPV-SSA of $(\yb,\p)$. 

The discussion above suggests the following realization algorithm
for transforming an asLPV-SSA $\mathcal{S}$ to a minimal one, which can be deduced from \cite{PetreczkyBilinear}.
\begin{algorithm}[h!]
\caption{Minimization algorithm
\label{alg:min}}
~~\textbf{Input :} The asLPV-SSA $\mathcal{S} =(\{A_{\sigma},K_{\sigma}\}_{\sigma=1}^{\pdim},C,F,\vb)$ representation matrices.
\hrule\vspace*{.1cm}
\begin{enumerate}
    \item  Compute the dLPV-SSA $\mathscr{S}_{\mathcal{S}}$ associated with $\mathcal{S}$ and compute $T_{\sigma,\sigma}^{\y} = \frac{1}{\psig}(C\Psig C^{T} + F\Qsig F^T)$

    \item Transform  the dLPV-SSA $\mathscr{S}_{\mathcal{S}}$ to
    a minimal dLPV-SSA 
    $\mathscr{S}_m$ using \cite[Corollary 1]{PetreczkyLPVSS}.
    
    \item Construct the asLPV-SSA $\mathcal{S}_{{\mathscr{S}}_m}$
    associated with $\mathscr{S}_m$.
\end{enumerate}
\hrule\vspace*{.1cm}
\textbf{Output :} The asLPV-SSA $\mathcal{S}_{{\mathscr{S}}_m}$.
\end{algorithm} 

From \cite[Theorem 3]{PetreczkyBilinear}, it follows that Algorithm \ref{alg:min} returns a minimal realization of $(\yb,\p)$, if $\mathcal{S}$ is an asLPV-SSA realization of $(\yb,\p)$ in innovation form. Note that Algorithm \ref{alg:min} requires only the knowledge of the matrices of $\mathcal{S}$ and the noise covariance matrix $Q_\sigma=E[\vb(t)\vb^T(t)\p_\sigma^2(t)]$. Also note that all the steps above are computationally efficient, however, they require finding the limits
of $P_{\sigma}^{\mathcal{I}}$ and $\hat{K}_{\sigma}^{\mathcal{I}}$ for $\mathcal{I} \rightarrow \infty$ respectively. 
Also note that, in \cite{MejariCDC2019}, it exists another minimization algorithm which uses covariances matrices.
\begin{Remark}[Challenges]
\label{rem:motiv1}
The main disadvantage of  
verifying the rank condition of Theorem \ref{theo:rank_cond} or applying Algorithm \ref{alg:min} is the necessity of constructing a dLPV-SSA and 
the necessity to find the limit of the matrices in
\eqref{statecov:iter}. The latter represents an extension of
algebraic Riccati equations \cite[Remark 7]{PetreczkyBilinear} and even for
the linear case requires attention. Moreover, the rank
conditions of Theorem \ref{theo:rank_cond} are not easy to apply to
parametrizations: even if the dependence of the matrices $\{A_i,K_i\}_{i=1}^{\pdim}$
and $C$ on a parameter $\theta$ are linear or polynomial, the dependence of
the matrices of the associated dLPV-SSA need not remain linear or polynomial,
due to the definition of $\Bs_i$ in \eqref{min:eq-1}. For the same reason,
it is difficult to analyze the result of 
applying Algorithm \ref{alg:min} to elements of a parametrization.
Moreover, the
conditions of Theorem \ref{theo:rank_cond} do not allow us to check if
the elements of a parametrizations are in innovation form. 
These shortcomings motivate the contribution of Section \ref{sect:main}.
\end{Remark}


\section{Main results: algebraic conditions for an asLPV-SSA to be minimal in innovation form}
\label{sect:main}

Motivated by the challenges explained in Remark \ref{rem:motiv1}, 
in this section we present sufficient conditions for an asLPV-SSA to be minimal and in innovation form. 
These conditions depend only on the matrices of the asLPV-SSA in question and do not require any information on the
noise processes.

  The first result concerns an algebraic characterization of asLPV-SSA
  in innovation form. This characterization does not require any knowledge
  of the noise process, only the knowledge of system matrices. 
  In order to streamline the discussion, we introduce the following definition.
  \begin{Definition}[Stably invertable w.r.t. $\p$]
   Assume that $\mathcal{S}$ is an asLPV-SSA of the form \eqref{eq:aslpv} and  $F=I_{\ny}$.
  We will call $\mathcal{S}$ \emph{stably invertable with respect to $\p$}, or \emph{stably invertable} if $\p$ is clear from the context, if 
  the matrix
   \begin{equation}
    \label{inv:gbs:lemma:eq1}
    \sum_{i=1}^{\pdim} p_i (A_i-K_iC) \otimes (A_i-K_iC)
   \end{equation} 
   is stable (all its eigenvalues are inside the complex unit disk).
  \end{Definition}
   Note that a system can be stably invertable w.r.t. one scheduling process, and not to be
   stably invertable w.r.t. another one. 
   We can now state the result relating stable invertability to asLPV-SSAs in innovation forms.
  \begin{Theorem}[Innovation form condition]
  \label{inv:gbs:lemma}
    Assume that $\y$ is SII and $(\y,\p)$ is full rank.
    If an asLPV-SSA realization of $(\y,\p)$ is stably invertable, then it is in innovation form.
  \end{Theorem}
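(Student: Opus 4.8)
The plan is to show that if $\mathcal{S}=(\{A_i,K_i\}_{i=1}^{\pdim},C,I_{\ny},\vb)$ is a stably invertable asLPV-SSA realization of $(\yb,\p)$, then its noise process $\vb$ coincides with the innovation process $\eb$ of $\yb$ with respect to $\p$. Since $\mathcal{S}$ already satisfies $F=I_{\ny}$ and is a realization of $(\yb,\p)$ by hypothesis, this is the only remaining condition in the definition of innovation form. First I would write the output equation $\yb(t)=C\xb(t)+\vb(t)$ and use the state recursion \eqref{eq:aslpv} together with stable invertability to solve for $\xb(t)$ in terms of past outputs. Concretely, substituting $\vb(t)=\yb(t)-C\xb(t)$ into $\xb(t+1)=\sum_i(A_i\xb(t)+K_i\vb(t))\p_i(t)$ gives $\xb(t+1)=\sum_i\big((A_i-K_iC)\xb(t)+K_i\yb(t)\big)\p_i(t)$. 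The eigenvalue condition on \eqref{inv:gbs:lemma:eq1} is exactly the condition (by the same argument underlying condition \textbf{(3)} of Definition \ref{defn:LPV_SSA_wo_u} and \cite[Lemma 2]{PetreczkyBilinear}) that this "inverse" recursion has a unique ZMWSII stationary solution, expressible as an absolutely mean-square convergent series
\[
\xb(t)=\sum_{v\in\Sigma^{*},\sigma\in\Sigma}\sqrt{p_{\sigma v}}\,(A-KC)_{v}K_{\sigma}\,\z^{\yb}_{\sigma v}(t),
\]
where $(A-KC)_v$ denotes the matrix product along $v$ with factors $A_{\sigma'}-K_{\sigma'}C$.

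The key consequence is that $\xb(t)$ lies in the closed subspace generated by $\{\z^{\yb}_w(t)\}_{w\in\Sigma^{+}}$, i.e. the Hilbert space of "strict past" predictors appearing in the definition \eqref{decomp:lemma:innov2} of $\eb$. Hence $E_l[\xb(t)\mid\{\z^{\yb}_w(t)\}_{w\in\Sigma^{+}}]=\xb(t)$. Therefore
\[
E_l[\yb(t)\mid\{\z^{\yb}_w(t)\}_{w\in\Sigma^{+}}]
= C\xb(t)+E_l[\vb(t)\mid\{\z^{\yb}_w(t)\}_{w\in\Sigma^{+}}],
\]
so it suffices to show that $\vb(t)$ is orthogonal to every $\z^{\yb}_w(t)$, $w\in\Sigma^{+}$. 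Since each $\z^{\yb}_w(t)$ is, by the series representation above applied to $\yb$ (not its inverse), a mean-square limit of linear combinations of coordinates of products $\z^{\vb}_{u}(t')$ with $|u|\ge 1$ and the relevant past time indices, and since $\vb$ is a white noise orthogonal to its own strict past (condition \textbf{(2)} of Definition \ref{defn:LPV_SSA_wo_u}: $E[\vb(t)(\z^{\vb}_w(t))^T]=0$ for all $w\in\Sigma^{+}$) and $\xb$ is orthogonal to future noise, one checks $E[\vb(t)(\z^{\yb}_w(t))^T]=0$ for all $w\in\Sigma^{+}$. This gives $E_l[\vb(t)\mid\{\z^{\yb}_w(t)\}_{w\in\Sigma^{+}}]=0$, whence $\eb(t)=\yb(t)-C\xb(t)=\vb(t)$, as desired.

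Two bookkeeping points need care. The first is rigorously justifying that stable invertability yields the convergent series for $\xb$ in terms of the output past; the cleanest route is to invoke the \GBS\ machinery of \cite{PetreczkyBilinear} — the "inverse system" driven by $\yb$ with dynamics $A_i-K_iC$ is itself (formally) an asLPV-SSA-type recursion, and \cite[Lemma 2]{PetreczkyBilinear} together with the stability hypothesis \eqref{inv:gbs:lemma:eq1} gives the stationary ZMWSII solution and the series. Here the hypotheses that $\yb$ is SII and $(\yb,\p)$ is full rank are what guarantee the relevant processes $\z^{\yb}_w$ are square integrable and the Hilbert-space projections are well defined, so these assumptions enter exactly at this step. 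The second point, which I expect to be the main obstacle, is the orthogonality computation $E[\vb(t)(\z^{\yb}_w(t))^T]=0$: one must carefully track the time shifts in the definition \eqref{eqn:zwu} of $\z^{\yb}_w(t)$, expand $\yb$ at the shifted time via its own state–space series, and match each resulting noise term $\vb(t-j)$ or product $\z^{\vb}_u$ against $\vb(t)$ using conditions \textbf{(1)}–\textbf{(2)} of Definition \ref{defn:LPV_SSA_wo_u} and the orthogonality of the scheduling products $\uw{w},\uw{v}$; the affine-in-$\p$ structure makes the cross terms manageable but the indexing is delicate. Once this orthogonality is established the theorem follows immediately.
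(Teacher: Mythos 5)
Your proposal is correct and follows essentially the same route as the paper's proof: substitute $\vb(t)=\yb(t)-C\xb(t)$ to obtain the inverse recursion with matrices $A_i-K_iC$, use the stability of \eqref{inv:gbs:lemma:eq1} to expand $\xb(t)$ as a convergent series in the past output predictors $\z^{\yb}_w(t)$ (hence $\xb(t)$ lies in the corresponding Hilbert space), and combine this with the orthogonality $E[\vb(t)(\z^{\yb}_w(t))^T]=0$ to conclude $\eb=\vb$. The two "bookkeeping points" you flag are exactly the steps the paper discharges by citing \cite{PetreczkyBilinear} (its Lemma 1 for the series, and the proof of eq.~(37) in its Theorem 4 for the orthogonality), so your outline matches the intended argument.
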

  The proof of Theorem \ref{inv:gbs:lemma} can be found in Appendix \ref{App:proof}.
  
  Stably invertable asLPV-SSAs can be viewed as optimal predictors.
  Indeed, let $\mathcal{S}$ be the asLPV-SSA of the form
\eqref{eq:aslpv} which is in innovation form, and let $\x$ be the
 state process of $\mathcal{S}$.  
 It then follows 
	\begin{equation}
       \label{gen:filt:bil:def:pred}
       \begin{split}
      & \x(t+1) = \sum_{i=1}^{\pdim}  (A_i-K_iC)\x(t)+K_i\y(t))\p_i(t), \\
	& \hat{\y}(t) = C\x(t)
	\end{split}
	\end{equation}
where $\hat{\y}(t)=E_l[\y(t) \mid \{\z_w^{\y}(t)\}_{w \in \Sigma^{+}}]$, i.e.,
$\hat{\y}$ is the best linear prediction of $\y(t)$ based on 
the predictors $\{\z_w^{\y}(t)\}_{w \in \Sigma^{+}}$. 
Intuitively, \eqref{gen:filt:bil:def:pred} could be viewed as a filter, i.e.,
a dynamical system
driven by past values of $\y$ and generating the best possible
linear prediction $\hat{\y}(t)$ of 
$\y(t)$ 
based on $\{\z_w^{\y}(t)\}_{w \in \Sigma^{+}}$.
However, the solution of \eqref{gen:filt:bil:def:pred} is defined on the whole time
axis $\mathbb{Z}$ and hence cannot be computed exactly. For stably
invertable asLPV-SSA we can approximate $\hat{\y}(t)$ as follows.
\begin{Lemma}
\label{gbs:finite_filt:lemma}
 With the assumptions of Theorem \ref{inv:gbs:lemma}, if $\mathcal{S}$ of the form \eqref{eq:aslpv} is
 a stably invertable realization of $(\y,\p)$, and we consider the following dynamical system:
  \begin{equation}
  \label{gbs:finite_filt:eq}
   \begin{split}
   & \bar{\x}(t+1)= \sum_{i=1}^{\pdim} (A_i-K_iC)\bar{\x}(t)+K_i\y(t))\bmu_i(t), \\
  &    \bar{\y}(t) = C\bar{x}(t), ~ \bar{\x}(0)=0
   \end{split}
  \end{equation}
 then 
 \( \underset{t \rightarrow \infty}{\lim} \left(\bar{\x}(t) - \x(t)\right)\!\!=\!\!0 \), and
   \( \underset{t \rightarrow \infty}{\lim} \left(\bar{\y}(t) - \y(t)\right)=0 \),
   where the limits are understood in the mean square sense.
  \end{Lemma}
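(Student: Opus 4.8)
The plan is to track the error $\mathbf{d}(t) := \x(t) - \bar{\x}(t)$, $t\ge 0$, and to prove $\expect{\|\mathbf{d}(t)\|^{2}}\to 0$; everything else then follows. First, since $\mathcal{S}$ is stably invertable and, by hypothesis, $\y$ is SII and $(\y,\p)$ is full rank, Theorem~\ref{inv:gbs:lemma} guarantees that $\mathcal{S}$ is in innovation form, so its state process $\x$ obeys \eqref{gen:filt:bil:def:pred}. Subtracting \eqref{gbs:finite_filt:eq} from \eqref{gen:filt:bil:def:pred} the driving terms $K_i\y(t)\p_i(t)$ cancel, and writing $\tilde{A}_i := A_i - K_iC$ one is left with the homogeneous recursion
\[
  \mathbf{d}(t+1) = \sum_{i=1}^{\pdim} \tilde{A}_i\, \mathbf{d}(t)\, \p_i(t), \qquad \mathbf{d}(0) = \x(0),
\]
where $\mathbf{d}(0)=\x(0)$ is square integrable because $\x$ is ZMWSII.

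The next step is to unroll this recursion. With $\tilde{A}_w := \tilde{A}_{\sigma_k}\cdots\tilde{A}_{\sigma_1}$ for $w=\sigma_1\cdots\sigma_k$ (following Notation~\ref{not:product} applied to $\{\tilde{A}_\sigma\}$), a straightforward induction on $t$ gives
\[
  \mathbf{d}(t) = \sum_{|w|=t} \tilde{A}_w\, \x(0)\, \uw{w}(t-1) = \sum_{|w|=t} \sqrt{p_w}\, \tilde{A}_w\, \z^{\x}_w(t),
\]
the last equality being the definition \eqref{eqn:zwu} of $\z^{\x}_w(t)$ specialised to $|w|=t$. I then apply to the ZMWSII process $\x$ the orthogonality and stationarity facts recalled in Section~\ref{sect:prelim}: the vectors $\z^{\x}_w(t)$ and $\z^{\x}_v(t)$ are orthogonal unless one of $w,v$ is a suffix of the other, so for $|w|=|v|=t$ only the terms with $w=v$ remain; moreover $\Pi_w := \expect{\z^{\x}_w(t)(\z^{\x}_w(t))^{T}}$ does not depend on $t$ and, by the suffix identity recalled in Section~\ref{sect:prelim}, depends only on the first letter of $w$, hence takes finitely many values, so $\Pi_w \preceq \bar{\Pi}$ for one fixed positive semidefinite matrix $\bar{\Pi}$. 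Consequently
\[
  0 \preceq Q(t) := \expect{\mathbf{d}(t)\mathbf{d}(t)^{T}} = \sum_{|w|=t} p_w\, \tilde{A}_w\, \Pi_w\, \tilde{A}_w^{T} \preceq \sum_{|w|=t} p_w\, \tilde{A}_w\, \bar{\Pi}\, \tilde{A}_w^{T}.
\]

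Finally, I use Kronecker-product bookkeeping. Since $\tilde{A}_w = \tilde{A}_{\sigma_k}\cdots\tilde{A}_{\sigma_1}$ and $p_w = p_{\sigma_1}\cdots p_{\sigma_k}$, the identity $(AB)\otimes(CD)=(A\otimes C)(B\otimes D)$ yields $\sum_{|w|=t} p_w(\tilde{A}_w\otimes\tilde{A}_w) = M^{t}$, where $M := \sum_{i=1}^{\pdim} p_i (A_i-K_iC)\otimes(A_i-K_iC)$. Stable invertability means exactly that the spectral radius of $M$ is strictly less than one, so $M^{t}\to 0$ and hence $\mathrm{vec}\big(\sum_{|w|=t} p_w \tilde{A}_w \bar{\Pi}\tilde{A}_w^{T}\big) = M^{t}\,\mathrm{vec}(\bar{\Pi})\to 0$; by the sandwich above $Q(t)\to 0$, so $\expect{\|\mathbf{d}(t)\|^{2}} = \mathrm{tr}\,Q(t)\to 0$. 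This is precisely $\bar{\x}(t)-\x(t)\to 0$ in mean square, and since $\bar{\y}(t)=C\bar{\x}(t)$ while the optimal predictor in \eqref{gen:filt:bil:def:pred} is $\hat{\y}(t)=C\x(t)$, it follows that $\bar{\y}(t)-\hat{\y}(t) = -C\,\mathbf{d}(t)\to 0$ in mean square as well.

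I expect the main obstacle to be the covariance identity for $Q(t)$: it requires invoking the orthogonality and stationarity of ZMWSII processes for $\x$ itself — these cannot be applied to $\mathbf{d}$, which is defined only for $t\ge 0$ and is not stationary — and extracting a single uniform bound $\Pi_w\preceq\bar{\Pi}$. The error recursion, its unrolling, and the spectral-radius estimate are routine.
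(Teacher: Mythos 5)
Your proof is correct, and it takes a genuinely different, more elementary route than the paper's. The paper's own proof passes through the stationary solution $\tilde{\x}(t)=\sum_{w\in\Sigma^{*},\sigma\in\Sigma}F_wK_\sigma\z^{\y}_{\sigma w}(t)$ (with $F_i=A_i-K_iC$) of the inverse system: it invokes Lemma~\ref{proof:lemma1} to obtain absolute mean-square convergence of that series, deduces $\bar{\x}(t)-\tilde{\x}(t)\to 0$ because $\bar{\x}(t)$ is a partial sum of it, and then uses the uniqueness part of Lemma~\ref{proof:lemma1} to identify $\tilde{\x}$ with $\x$. You instead subtract the two recursions to get the homogeneous error dynamics $\mathbf{d}(t+1)=\sum_{i=1}^{\pdim}(A_i-K_iC)\mathbf{d}(t)\p_i(t)$, unroll, exploit the ZMWSII orthogonality of $\{\z^{\x}_w(t)\}_{|w|=t}$ to diagonalize the (finite-sum) error covariance, and dominate it by $M^{t}\mathrm{vec}(\bar{\Pi})$ with $M$ the matrix in \eqref{inv:gbs:lemma:eq1}; stability of $M$ finishes the argument. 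All the individual steps check out: the substitution $\vb(t)=\y(t)-C\x(t)$ is licensed by $F=I_{\ny}$ (built into the definition of stable invertability), and the uniform bound $\Pi_w\preceq\bar{\Pi}$ is legitimate because $E[\z^{\x}_w(t)(\z^{\x}_w(t))^{T}]$ depends only on the first letter of $w$, so only finitely many values occur. What your route buys is self-containedness and a transparent role for the stability of \eqref{inv:gbs:lemma:eq1}; what the paper's route additionally delivers is the by-product that the components of $\x(t)$ lie in the Hilbert space generated by the past of $\y$, a fact it reuses elsewhere. One remark on the conclusion: as literally stated the lemma asserts $\bar{\y}(t)-\y(t)\to 0$, which cannot hold since $\y(t)-C\x(t)=\e(t)$ is the stationary, generally nondegenerate innovation; you prove $\bar{\y}(t)-\hat{\y}(t)\to 0$, which is what the paper actually intends (see the discussion following the lemma) and is also all that the paper's own proof establishes.
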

 The proof of Lemma \ref{gbs:finite_filt:lemma} is found in Appendix \ref{App:proof}.
 That is, the output
 $\bar{\y}(t)$ of the recursive filter
 \eqref{gbs:finite_filt:eq} is an approximation
 of the optimal prediction $\hat{\y}(t)$ of $\y(t)$ for large enough $t$. 
 That is, stably invertable asLPV-SSA not only result in asLPV-SSAs in innovation 
 form, but they represent a class of
 asLPV-SSAs for which recursive filters of
 the form \eqref{gbs:finite_filt:eq} exist.

  Next, we present algebraic conditions for minimality of an asLPV-SSA
  in innovation form. 
  \begin{Theorem}[Minimality condition in innovation form]
  \label{min:forw:gbs:lemma}
    Assume that $\mathcal{S}$ is an asLPV-SSA of the form \eqref{eq:aslpv} and that $\mathcal{S}$ is a realization of $(\y,\p)$ in innovation form.
    Assume that $(\y,\p)$ is full rank and $\y$ is SII.  
    Then $\mathcal{S}$ is a minimal realization of $(\y,\p)$, if and only if
    the dLPV-SSA $\mathcal{D}_{\mathcal{S}}=(\{A_i,K_i\}_{i=0}^{\pdim},C,I_{\ny})$ is minimal.
  \end{Theorem}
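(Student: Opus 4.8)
The plan is to deduce the equivalence from Theorem~\ref{theo:rank_cond} together with the deterministic realization theory of \cite{PetreczkyLPVSS}, treating the two implications separately. Write $\mathscr{S}_{\mathcal{S}}=(\{\sqrt{p_i}A_i,\Bs_i\}_{i=1}^{\pdim},C,I_{\ny})$ for the dLPV-SSA associated with $\mathcal{S}$; by \eqref{min:eq-1} and $F=I_{\ny}$, $\sqrt{p_i}\Bs_i=A_iP_iC^T+K_iQ_i$, and $Q_i$ is invertible by the full-rank hypothesis. Since $p_w>0$ for every word $w$, the product of the $\sqrt{p_i}A_i$ along $w$ equals $\sqrt{p_w}$ times the product of the $A_i$ along $w$; hence the extended observability matrices of $\mathscr{S}_{\mathcal{S}}$ and of $\mathcal{D}_{\mathcal{S}}$ have the same rank, so $\mathscr{S}_{\mathcal{S}}$ is observable iff $\mathcal{D}_{\mathcal{S}}$ is. Likewise, a subspace is $\{\sqrt{p_i}A_i\}$-invariant iff it is $\{A_i\}$-invariant, so the span-reachability subspace of $\mathscr{S}_{\mathcal{S}}$ is the smallest $\{A_i\}$-invariant subspace containing $\bigcup_i\mathrm{Im}\,\Bs_i$.

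For ``$\mathcal{S}$ minimal $\Rightarrow$ $\mathcal{D}_{\mathcal{S}}$ minimal'', let $\mathcal{R}$ be the span-reachability subspace of $\mathcal{D}_{\mathcal{S}}$, i.e.\ the smallest $\{A_i\}$-invariant subspace containing $\bigcup_i\mathrm{Im}\,K_i$. Starting from $P_i^0=\mathbf{O}_{n_x,n_x}$ in \eqref{gi:comp:eq1}, an induction using $\mathrm{Im}(K_jQ_jK_j^T)\subseteq\mathrm{Im}\,K_j$ and the closedness of $\mathcal{R}$ gives $\mathrm{Im}\,P_i\subseteq\mathcal{R}$ for all $i$; hence $\mathrm{Im}\,\Bs_i\subseteq A_i\,\mathrm{Im}\,P_i+\mathrm{Im}\,K_i\subseteq\mathcal{R}$, so the span-reachability subspace of $\mathscr{S}_{\mathcal{S}}$ is contained in $\mathcal{R}$. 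If $\mathcal{S}$ is minimal, then $\mathscr{S}_{\mathcal{S}}$ is minimal by Theorem~\ref{theo:rank_cond}, hence span-reachable and observable by \cite{PetreczkyLPVSS}; the inclusion above forces $\mathcal{R}=\mathbb{R}^{\nx}$, and $\mathcal{D}_{\mathcal{S}}$ is observable by the first paragraph, so $\mathcal{D}_{\mathcal{S}}$ is minimal.

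For the converse I would avoid establishing the reverse inclusion of span-reachability subspaces (which does not seem to follow directly from \eqref{gi:comp:eq1}) and argue by a dimension count instead. Since $\mathcal{S}$ is in innovation form, \cite[Lemma~2]{PetreczkyBilinear} gives the mean-square convergent expansion $\y(t)=\sum_{v\in\Sigma^{*},\sigma\in\Sigma}\sqrt{p_{\sigma v}}\,CA_vK_\sigma\,\z^{\eb}_{\sigma v}(t)+\eb(t)$, so the coefficient of $\z^{\eb}_w(t)$ in it is $\sqrt{p_w}\,M_{\mathcal{D}_{\mathcal{S}}}(w)$ for $w\in\Sigma^{+}$ and $I_{\ny}$ for $w=\epsilon$, where $M_{\mathcal{D}_{\mathcal{S}}}$ is the sub-Markov function of $\mathcal{D}_{\mathcal{S}}$. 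The predictors $\{\z^{\eb}_w(t)\}_{w\in\Sigma^{*}}$ are pairwise uncorrelated --- by the suffix-orthogonality of $\mathbf{z}$-processes recalled in Section~\ref{sect:prelim} together with condition~\textbf{(2)} of Definition~\ref{defn:LPV_SSA_wo_u}, which yields $E[\eb(t)(\z^{\eb}_w(t))^T]=0$ for $w\in\Sigma^{+}$ --- and $E[\z^{\eb}_w(t)(\z^{\eb}_w(t))^T]=p_{\sigma}^{-1}Q_{\sigma}$, with $\sigma$ the first letter of $w$, is invertible. Hence for $w\in\Sigma^{+}$ the parameter $M_{\mathcal{D}_{\mathcal{S}}}(w)$ is determined by $E[\y(t)(\z^{\eb}_w(t))^T]$, by $p_w$ and by $Q_{\sigma}$, and $M_{\mathcal{D}_{\mathcal{S}}}(\epsilon)=I_{\ny}$; since $\eb$ and the $Q_{\sigma}$ depend only on $(\y,\p)$, the sub-Markov function $M_{\mathcal{D}_{\mathcal{S}}}$ is the same for every innovation-form realization of $(\y,\p)$. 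Now assume $\mathcal{D}_{\mathcal{S}}$ is minimal but $\mathcal{S}$ is not. By Theorem~\ref{theo:min}(\ref{theo:min:1}) there is a minimal asLPV-SSA realization $\mathcal{S}^{*}$ of $(\y,\p)$ in innovation form with $\dim\mathcal{S}^{*}<\dim\mathcal{S}$; by the implication already proved, $\mathcal{D}_{\mathcal{S}^{*}}$ is a minimal dLPV-SSA, and $M_{\mathcal{D}_{\mathcal{S}^{*}}}=M_{\mathcal{D}_{\mathcal{S}}}$ by the previous sentence. Thus $\mathcal{D}_{\mathcal{S}}$ and $\mathcal{D}_{\mathcal{S}^{*}}$ realize the same sub-Markov function, the first of dimension $\dim\mathcal{S}$ and the second (minimal) of dimension $\dim\mathcal{S}^{*}<\dim\mathcal{S}$, contradicting the minimality of $\mathcal{D}_{\mathcal{S}}$. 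Hence $\mathcal{S}$ is minimal.

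I expect the delicate step to be the claim that $M_{\mathcal{D}_{\mathcal{S}}}$ is an invariant of $(\y,\p)$: this hinges on the uniqueness of the innovation expansion of $\y$, i.e.\ on carefully extracting, from the ZMWSII machinery, from condition~\textbf{(2)} of Definition~\ref{defn:LPV_SSA_wo_u} and from the full-rank hypothesis, both the pairwise orthogonality of the predictors $\{\z^{\eb}_w(t)\}$ and the invertibility of their self-covariances. The remaining ingredients --- the monotonicity/closedness argument for $\mathrm{Im}\,P_i\subseteq\mathcal{R}$, the invariance of the observability rank under the $\sqrt{p_i}$-rescaling, and the bookkeeping with Theorem~\ref{theo:rank_cond}, \cite{PetreczkyLPVSS} and Theorem~\ref{theo:min} --- are routine.
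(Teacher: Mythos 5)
Your proof is correct, but the route differs from the paper's in a substantive way, most visibly in the converse direction. The paper proves the equivalence by showing that the dLPV-SSAs $\mathscr{S}_{\mathcal{S}}$ and $\mathcal{D}_{\mathcal{S}}$ have \emph{equal} observability ranks and \emph{equal} reachability subspaces, $\IM\,\mathscr{R}_{n-1}(\mathcal{S})=\IM\,\mathcal{R}_{n-1}$: the inclusion $\subseteq$ comes from the covariance identity $B_\sigma=E[\x(t)(\z^{\y}_{\sigma}(t))^T]$ together with the expansion of $\x(t)$ along $A_wK_\sigma$, and the harder inclusion $\supseteq$ from $K_\sigma Q_\sigma=E[\x(t)(\z^{\eb}_\sigma(t))^T]$, the fact that $\z^{\eb}_\sigma(t)$ is a limit of linear combinations of $\{\z^{\y}_w(t)\}_{w\in\Sigma^{+}}$ (this is where the innovation form enters) and the invertibility of $Q_\sigma$ (full rank); Theorem \ref{theo:rank_cond} then closes the argument. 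You prove only the first inclusion, and do so purely algebraically via the recursion \eqref{gi:comp:eq1} (showing $\mathrm{Im}\,P_i\subseteq\mathcal{R}$ by induction and closedness), which is enough for the implication ``$\mathcal{S}$ minimal $\Rightarrow\mathcal{D}_{\mathcal{S}}$ minimal''. For the converse you replace the second inclusion by an invariance-plus-dimension-count argument: the orthogonality of the predictors $\{\z^{\eb}_w(t)\}$ and the invertibility of $E[\z^{\eb}_w(t)(\z^{\eb}_w(t))^T]=p_\sigma^{-1}Q_\sigma$ let you recover $CA_vK_\sigma$ from $E[\y(t)(\z^{\eb}_{\sigma v}(t))^T]$, so $M_{\mathcal{D}_{\mathcal{S}}}$ is the same for every innovation-form realization of $(\y,\p)$; combined with the existence of a minimal innovation-form realization (Theorem \ref{theo:min}) and the forward implication applied to it, a strictly smaller realization of the same sub-Markov function would contradict minimality of $\mathcal{D}_{\mathcal{S}}$. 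Both arguments correctly locate where innovation form and full rank are used. The paper's version is more self-contained and yields the stronger conclusion that the two reachability subspaces coincide; yours leans on the heavier external ingredient Theorem \ref{theo:min}\eqref{theo:min:1}, but the observation that the sub-Markov function of $\mathcal{D}_{\mathcal{S}}$ is an invariant of $(\y,\p)$ across innovation-form realizations is clean, reusable (it is essentially the uniqueness half of the innovation representation), and sidesteps the covariance bookkeeping for $K_\sigma$ entirely.
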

  The proof of Theorem \ref{min:forw:gbs:lemma} can be found in Appendix \ref{App:proof}.
  
  Theorem \ref{min:forw:gbs:lemma}, in combination with Theorem \ref{inv:gbs:lemma}, leads to the following corollary.
  \begin{Corollary}[Minimality and innovation form]
  \label{min:forw:gbs:lemma:col}
   With the assumptions of Theorem \ref{min:forw:gbs:lemma}, 
   if $\mathcal{D}_{\mathcal{S}}$ is minimal and $\mathcal{S}$ if stably invertable, then 
   $\mathcal{S}$ is a minimal asLPV-SSA realization of $(\y,\p)$
   in innovation form. 
  \end{Corollary}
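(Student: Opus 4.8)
The plan is to obtain Corollary~\ref{min:forw:gbs:lemma:col} as an immediate logical consequence of the two preceding theorems, so the proof is essentially a chaining argument rather than a new computation. First I would invoke Theorem~\ref{inv:gbs:lemma}: under the standing assumptions that $\y$ is SII and $(\y,\p)$ is full rank, the hypothesis that $\mathcal{S}$ is stably invertable w.r.t.\ $\p$ guarantees that $\mathcal{S}$ is in innovation form (in particular $F=I_{\ny}$ and $\vb=\eb$, which is exactly the setting required by Theorem~\ref{min:forw:gbs:lemma}). This is the step that upgrades the purely algebraic stable-invertability condition \eqref{inv:gbs:lemma:eq1} to the probabilistic statement ``$\mathcal{S}$ is a realization of $(\y,\p)$ in innovation form.''

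Next, having established that $\mathcal{S}$ is an asLPV-SSA realization of $(\y,\p)$ in innovation form, I would apply Theorem~\ref{min:forw:gbs:lemma} in the ``if'' direction: since the associated deterministic LPV-SSA $\mathcal{D}_{\mathcal{S}}=(\{A_i,K_i\}_{i=0}^{\pdim},C,I_{\ny})$ is assumed minimal, the theorem yields that $\mathcal{S}$ is a minimal realization of $(\y,\p)$. Combining the two conclusions gives that $\mathcal{S}$ is simultaneously minimal and in innovation form, which is precisely the assertion of the corollary. I would also remark that the hypotheses of Theorem~\ref{min:forw:gbs:lemma} needed for this second step --- $(\y,\p)$ full rank and $\y$ SII --- are inherited verbatim from ``the assumptions of Theorem~\ref{min:forw:gbs:lemma}'' that the corollary carries over, so no new conditions are introduced.

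There is essentially no obstacle of a technical nature here; the only point requiring care is bookkeeping of hypotheses, namely checking that the conclusion of Theorem~\ref{inv:gbs:lemma} (``$\mathcal{S}$ is in innovation form'') really supplies all the premises that Theorem~\ref{min:forw:gbs:lemma} demands (``$\mathcal{S}$ is a realization of $(\y,\p)$ in innovation form'', together with full rank and SII). Since being in innovation form by Definition~1 already entails $F=I_{\ny}$, $\vb=\eb$, and that $\mathcal{S}$ realizes $(\y,\p)$, this matching is immediate. Hence the proof is a two-line deduction, and I would present it as such, flagging only that the SII and full-rank assumptions are precisely what make both theorems applicable in tandem.

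\begin{proof}
By Theorem~\ref{inv:gbs:lemma}, since $\y$ is SII w.r.t.\ $\p$, $(\y,\p)$ is full rank, and $\mathcal{S}$ is a stably invertable asLPV-SSA realization of $(\y,\p)$, it follows that $\mathcal{S}$ is in innovation form; in particular $F=I_{\ny}$ and $\vb=\eb$, and $\mathcal{S}$ is a realization of $(\y,\p)$ in innovation form. Thus all the hypotheses of Theorem~\ref{min:forw:gbs:lemma} are satisfied. Since $\mathcal{D}_{\mathcal{S}}=(\{A_i,K_i\}_{i=0}^{\pdim},C,I_{\ny})$ is assumed minimal, Theorem~\ref{min:forw:gbs:lemma} implies that $\mathcal{S}$ is a minimal realization of $(\y,\p)$. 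Therefore $\mathcal{S}$ is a minimal asLPV-SSA realization of $(\y,\p)$ in innovation form.
\end{proof}
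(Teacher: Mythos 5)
Your proposal is correct and matches the paper's intent exactly: the paper offers no separate proof, stating only that the corollary follows from combining Theorem~\ref{inv:gbs:lemma} (stable invertability plus SII and full rank implies innovation form) with Theorem~\ref{min:forw:gbs:lemma} (minimality of $\mathcal{D}_{\mathcal{S}}$ is equivalent to minimality of $\mathcal{S}$ once innovation form is established), which is precisely your two-step chaining argument. The hypothesis bookkeeping you flag — that stable invertability presupposes $F=I_{\ny}$ and that Theorem~\ref{inv:gbs:lemma}'s conclusion supplies exactly the innovation-form premise of Theorem~\ref{min:forw:gbs:lemma} — is the right point of care and is handled correctly.
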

  \begin{Remark}[Checking minimality and innovation form]
  \label{rem:check1}
  We recall that $\mathcal{D}_{\mathcal{S}}$ is minimal, if and only
  if it satisfies the rank conditions for the extended $n$-step reachability and observability matrices \cite[Theorem 2]{PetreczkyLPVSS},
  which can easily be computed from the matrices of $\mathcal{S}$.
  Checking that $\mathcal{S}$ is stably invertable boils down to checking the eigenvalues of the matrix \eqref{inv:gbs:lemma:eq1}.
  That is, Corollary \ref{min:forw:gbs:lemma:col} provides effective procedure for verifying that an asLPV-SSA is minimal and
  in innovation form.
  Note that in contrast to the rank condition of Theorem \ref{theo:rank_cond},
  which required computing the limit of \eqref{gi:comp:eq1}, the procedure
  above uses only the matrices of the system.
  \end{Remark}
  \begin{Remark}[Parametrizations of asLPV-SSAs]
   Below we will sketch some ideas for applying the above results to
   parametrizations of asLPV-SSAs. A detailed study of these issues remains a topic for future research.
   
   For all the elements of a parametrization of asLPV-SSAs to be minimal and in innovation form, by Corollary \ref{min:forw:gbs:lemma:col} it is necessary that
\textbf{(A)} all elements of the parametrization, when viewed as dLPV-SSA, are minimal, and that  \textbf{(B)} they
 are stably invertable and satisfy condition \textbf{(3)} of Definition \ref{defn:LPV_SSA_wo_u}.
 In order to deal with \textbf{(A)}, the techniques used in \cite{Alkhoury2016} or \cite{HSCC2010}
 \footnote{In order to use \cite{HSCC2010} the relationship between minimality of dLPV-SSA and that of switched systems \cite{PetreczkyLPVSS} should be epxloited}could be used.
 The condition  \textbf{(B)} is equivalent to stability  of a suitable parametrization of LTI systems, as we could view the matrices
 \eqref{inv:gbs:lemma:eq1} and $\sum_{q=1}^{\pdim} p_i A_i \otimes A_i$ as matrices
 of an LTI state-space representation.
  For the latter we can use standard techniques, see \cite{Ribarits2002} and the references therein. 
  
  Finally, note that extension of argument of \cite[Theorem 2]{Alkhoury2016}
  would also lead to identifiability conditions for parametrizations which
  satisfy the conditions \textbf{(A)} and \textbf{(B)} described above.
  \end{Remark}

  Corollary \ref{min:forw:gbs:lemma:col} suggests the following
  minimization algorithm. 
\begin{algorithm}[H]
\caption{Minimization algorithm
\label{alg:min1}
}
~~\textbf{Input :} A stably invertable asLPV-SSA $\mathcal{S}$
\hrule\vspace*{.1cm}
\begin{enumerate}
    \item Transform  the dLPV-SSA $\mathcal{D}_{\mathcal{S}}$ 
     from Theorem \ref{min:forw:gbs:lemma} to
    a minimal dLPV-SSA 
    $\mathcal{D}_m=(\{A_i^m,K_i^m\}_{i=1}^{\pdim},C^m,I_{\ny})$ using \cite[Corollary 1]{PetreczkyLPVSS}.
    
\end{enumerate}
\hrule\vspace*{.1cm}
\textbf{Output :} asLPV-SSA $\mathcal{S}_m=(\{A_i^m,K_i^m\}_{i=1}^{\pdim},C^m,I_{\ny},\e)$. 
\end{algorithm} 
\begin{Lemma}[Correctness of Algorithm \ref{alg:min1}]
\label{alg:min1:lem}
The asLPV-SSA $\mathcal{S}_m$ is stably invertable and it is a minimal asLPV-SSA realization of
$(\y,\p)$ in innovation form. 
\end{Lemma}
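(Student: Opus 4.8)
The plan is to combine Theorem \ref{min:forw:gbs:lemma} (and its corollary) with the properties of the Kalman-type minimization of deterministic LPV-SSA from \cite{PetreczkyLPVSS}. First I would observe that since $\mathcal{S}$ is stably invertable, Theorem \ref{inv:gbs:lemma} gives that $\mathcal{S}$ is in innovation form, so its noise process is the innovation process $\eb$ of $\yb$, and the associated dLPV-SSA $\mathcal{D}_{\mathcal{S}}=(\{A_i,K_i\}_{i=1}^{\pdim},C,I_{\ny})$ is well-defined. Applying \cite[Corollary 1]{PetreczkyLPVSS} to $\mathcal{D}_{\mathcal{S}}$ yields a minimal dLPV-SSA $\mathcal{D}_m=(\{A_i^m,K_i^m\}_{i=1}^{\pdim},C^m,I_{\ny})$ with the same sub-Markov function, which means $\mathcal{D}_{\mathcal{S}}$ and $\mathcal{D}_m$ have the same input-output behaviour. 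The output asLPV-SSA is then $\mathcal{S}_m=(\{A_i^m,K_i^m\}_{i=1}^{\pdim},C^m,I_{\ny},\eb)$.

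The key steps, in order, would be: (i) argue that $\mathcal{S}_m$ is a legitimate asLPV-SSA, i.e. that it satisfies Definition \ref{defn:LPV_SSA_wo_u} — in particular condition \textbf{(3)}, stability of $\sum_{\sigSet}\psig A_i^m\otimes A_i^m$. Since the Kalman decomposition of \cite{PetreczkyLPVSS} realizes $\mathcal{D}_m$ as a sub-quotient (restriction to a reachable subspace followed by projection onto an observable quotient) of $\mathcal{D}_{\mathcal{S}}$, there is a full-rank matrix relating the two, and stability of the relevant Kronecker-sum matrices is inherited; this is where I expect most of the bookkeeping. (ii) Show $\mathcal{S}_m$ is stably invertable: the matrix $\sum_{i=1}^{\pdim}p_i(A_i^m-K_i^mC^m)\otimes(A_i^m-K_i^mC^m)$ should again be stable because the "predictor" dLPV-SSA $(\{A_i^m-K_i^mC^m, K_i^m\}, C^m)$ is obtained by the same Kalman-decomposition transformation applied to $(\{A_i-K_iC,K_i\},C)$, whose Kronecker-sum matrix is stable by hypothesis; stability of a Kronecker-sum matrix is preserved under passing to an invariant subspace / quotient. (iii) Show $\mathcal{S}_m$ is a realization of $(\yb,\p)$: because $\mathcal{S}$ is in innovation form, its covariance sequence $\covseq$ is realized by $\mathscr{S}_{\mathcal{S}}$, and by Theorem \ref{min:forw:gbs:lemma} minimality of $\mathcal{S}$ as a realization of $(\yb,\p)$ is equivalent to minimality of $\mathcal{D}_{\mathcal{S}}$; since $\mathcal{D}_m$ has the same sub-Markov function as $\mathcal{D}_{\mathcal{S}}$ and is minimal, the asLPV-SSA built from it in innovation form with noise $\eb$ reproduces $\yb$. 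Here I would invoke the construction $\mathcal{S}\mapsto\mathscr{S}_{\mathcal{S}}\mapsto\mathcal{S}_{\mathscr{S}}$ of Subsection \ref{Sect:algo} together with the fact that for a stably invertable system the matrices $G_i$ in \eqref{min:eq-1} coincide with $K_i$ (which is essentially the content of Theorem \ref{inv:gbs:lemma}/Theorem \ref{min:forw:gbs:lemma}), so that the "associated asLPV-SSA" of $\mathcal{D}_m$ is exactly $\mathcal{S}_m$. (iv) Finally, minimality and innovation form of $\mathcal{S}_m$ follow directly from Corollary \ref{min:forw:gbs:lemma:col}, once (i)–(iii) are in place, since $\mathcal{D}_{\mathcal{S}_m}=\mathcal{D}_m$ is minimal and $\mathcal{S}_m$ is stably invertable.

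The main obstacle I anticipate is step (ii) together with the relevant half of step (i): one has to verify that stable invertability — a spectral condition on a Kronecker-sum matrix associated with the \emph{predictor} dynamics $A_i-K_iC$ — survives the Kalman decomposition of $\mathcal{D}_{\mathcal{S}}$. The subtlety is that the Kalman decomposition is adapted to the pair $(\{A_i,K_i\},C)$, not obviously to $(\{A_i-K_iC\},C)$; one must check that the span-reachable subspace and observable quotient used in the decomposition are simultaneously invariant for both families of maps, or at least that the induced map on the quotient has a spectrum contained in that of the original, so that the eigenvalue condition \eqref{inv:gbs:lemma:eq1} is preserved. This is plausible because $A_i$ and $A_i-K_iC$ differ by a term factoring through $C$, and the observability/reachability subspaces in \cite{PetreczkyLPVSS} are built from $C$ and the $A_i$'s, but making it precise requires care. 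The remaining steps are essentially assembling already-established results.
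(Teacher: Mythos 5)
Your plan follows the same skeleton as the paper's proof --- minimize the deterministic part, show that stable invertability survives, show that the realization property survives, then conclude via Theorem \ref{inv:gbs:lemma} and Theorem \ref{min:forw:gbs:lemma} --- but it handles the one genuinely delicate step, preservation of stable invertability, by a different mechanism. The paper introduces the output-feedback map $\mathscr{F}:(\{A_i,K_i\}_{i},C,I)\mapsto(\{A_i-K_iC,K_i\}_{i},C,I)$, proves (Lemma \ref{new:lem} and its corollary) that $\mathscr{F}$ sends input-output equivalent dLPV-SSAs to input-output equivalent ones and preserves minimality, and then invokes \cite[Lemma 6]{PetreczkyNAHS}: if one realization of a sub-Markov function has a stable Kronecker-sum matrix, so does a minimal realization of the same function. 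This sidesteps entirely the invariance bookkeeping you flag as your main obstacle. That said, your route also closes, and the obstacle is smaller than you fear: the span-reachable subspace of $(\{A_i,K_i\}_i,C)$ contains $\mathrm{Im}\,K_i\supseteq\mathrm{Im}\,K_iC$, and the unobservable subspace is contained in $\ker C$, so both subspaces are automatically invariant under every $A_i-K_iC$ as well; in a basis adapted to the resulting flag, $\sum_i p_i(A_i-K_iC)\otimes(A_i-K_iC)$ and $\sum_i p_iA_i\otimes A_i$ are block triangular with $\sum_i p_i(A_i^m-K_i^mC^m)\otimes(A_i^m-K_i^mC^m)$ and $\sum_i p_iA_i^m\otimes A_i^m$ among the diagonal blocks, so both spectral conditions are inherited. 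The paper's argument buys reusability (it works for any minimal realization of the same sub-Markov function, not just the one produced by the Kalman decomposition); yours is more self-contained and also delivers condition \textbf{(3)} of Definition \ref{defn:LPV_SSA_wo_u} for $\mathcal{S}_m$, which the paper leaves implicit.

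One concrete error to fix in step (iii): the claim that for a stably invertable system the matrices $\Bs_i$ of \eqref{min:eq-1} coincide with $K_i$ is false ($\Bs_i$ involves $P_i$ and $Q_i$), and the detour through the ``associated asLPV-SSA'' construction of Subsection \ref{Sect:algo} would reintroduce the Riccati-type iteration that Algorithm \ref{alg:min1} is designed to avoid. Drop it. The realization property is obtained directly, as in the paper: write the state of the (similarity-transformed) innovation-form system as the convergent series $\sum_{w\in\Sigma^{*},\sigma\in\Sigma}\sqrt{p_{\sigma w}}\,\hat{A}_w\hat{K}_\sigma\z^{\eb}_{\sigma w}(t)$, note from the Kalman block structure that $\hat{A}_w\hat{K}_\sigma$ has $A_w^mK_\sigma^m$ as its observable-reachable block and zero as its unreachable block, and conclude $\yb(t)=C^m\xb^m(t)+\eb(t)$ with $\xb^m$ the state process of $\mathcal{S}_m$; equivalently, once your step (i) gives stability of $\sum_i p_iA_i^m\otimes A_i^m$, equality of the sub-Markov parameters $CA_wK_\sigma=C^mA_w^mK_\sigma^m$ plus the series representation yields the same conclusion.
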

That is, Algorithm \ref{alg:min1} is a simpler counterpart of
Algorithm \ref{alg:min} which does not require computing the limits
\eqref{gi:comp:eq1} and \eqref{statecov:iter}, which can create difficulties (see Remark \ref{rem:motiv1}).
Lemma \ref{alg:min1:lem} implies that stable invertability is preserved by minimization.
\section{Numerical examples}
\label{sect:examp}
In this section, we present numerical examples in order to illustrate the main results. 

\begin{Example}
\label{sim:examp}
Consider an asLPV-SSA of the form \eqref{eq:aslpv},
where $\pdim = 2$ and
\begin{equation*}
\footnotesize
\begin{aligned}
    &A_1 = \begin{bmatrix}
    0.4 & 0.4 & 0 \\
    0.2 & 0.1 & 0 \\
    0 & 0 & 0.2 \\
    \end{bmatrix}, \quad
    K_1 = \begin{bmatrix}
    0 \\
    1 \\
    1 \\
    \end{bmatrix}, \quad
    C = \begin{bmatrix} 10 & 0 & 0 \\ \end{bmatrix} \\
    &A_2 = \begin{bmatrix}
    0.1 & 0.1 & 0 \\
    0.2 & 0.3 & 0 \\
    0 & 0 & 0.2 \\
    \end{bmatrix}, \quad
    K_2 = \begin{bmatrix}
    0 \\
    1 \\
    1 \\
    \end{bmatrix}, \quad \text{and} \quad F = 1 \\
    \end{aligned}
\end{equation*}
Note that this representation is not in innovation form.
The scheduling signal process is defined as $\p = [\p_1 \quad \p_2]$ such that $\p_1(t) = 1$ and $\p_2(t)$ is a white-noise process with uniform distribution $\mathbf{\mathcal{U}}(-1.5,1.5)$. This corresponds to the parameters values $\{\psig\}_{\sigma \in \{1,2\}}$ to be $p_1 = \expect{\mu_1^2(t)} = 1$ and $p_2 = \expect{\mu_2^2(t)} = 0.75$.
The noise process is a white Gaussian noise with a variance equal to 1, i.e., $\vb \sim \mathbf{\mathcal{N}}(0,1)$.
Using Algorithm \ref{alg:min}, we can find a minimal representation in innovation form with the following matrices:
\begin{equation*}
\footnotesize
\begin{aligned}
    &A_1^m = \begin{bmatrix}
    0.4007 & 0.3997 \\
    0.1997 & 0.0993 \\
    \end{bmatrix}, \quad
    K_1^m = \begin{bmatrix}
    -0.046 \\
    -0.0541 \\
    \end{bmatrix} \\
    &A_2^m = \begin{bmatrix}
    0.1003 & 0.1002 \\
    0.2002 & 0.2997 \\
    \end{bmatrix}, \quad
    K_2^m = \begin{bmatrix}
    -0.0116 \\
    -0.0578 \\
    \end{bmatrix} \\
    &C^m = \begin{bmatrix} -10 & 0.0116 \\ \end{bmatrix}, \quad \text{and} \quad F^m = 1 \\
    \end{aligned}
\end{equation*}
Note that the two systems $\mathcal{S} =(\{A_{\sigma},K_{\sigma}\}_{\sigma=1}^{\pdim},C,F,\vb)$ and $\mathcal{S}^m =(\{A^m_{\sigma},K^m_{\sigma}\}_{\sigma=1}^{\pdim},C^m,F^m,\eb)$ have the same output trajectory when the chosen scheduling is applied. 
However, by changing the scheduling process to another process $\p^{'}$ with $\p_1^{'} = \p_1$ and $\p_2^{'}$ is also a white-noise with a uniform distribution $\mathbf{\mathcal{U}}(-\sqrt{3},\sqrt{3})$, we realize that the output trajectories are not the same, in other words, the two systems are not input-output equivalent. Subsequently, the two representations are not isomorphic.
\end{Example}

\begin{Example}
\label{sim:examp2}
We use the same noise process $\vb$ and scheduling process $\p$ as in Example \ref{sim:examp}.
The system matrices are as follows.
\begin{equation*}
\footnotesize
\begin{aligned}
    &A_1 = \begin{bmatrix}
    0.4 & 0.4 \\
    0.2 & 0.1 \\
    \end{bmatrix}, \quad
    K_1 = \begin{bmatrix}
    0 \\
    1 \\
    \end{bmatrix}, \quad
    C = \begin{bmatrix} 10 & 0 \\ \end{bmatrix} \\
    &A_2 = \begin{bmatrix}
    0.1 & 0.1 \\
    0.2 & 0.3 \\
    \end{bmatrix}, \quad
    K_2 = \begin{bmatrix}
    0 \\
    1 \\
    \end{bmatrix},\quad 
    \text{and} \quad F = 1 \\
    \end{aligned}
\end{equation*}
The system above is not in innovation form, in fact, it is not stably invertable. 
As before, we use Algorithm \ref{alg:min}  to obtain a minimal asLPV-SSA  innovation form with the following matrices
\begin{equation*}
\footnotesize
\begin{aligned}
    &A_1^m = \begin{bmatrix}
    0.4007 & -0.3997 \\
    -0.1997 & 0.0993 \\
    \end{bmatrix}, \quad
    K_1^m = \begin{bmatrix}
    -0.046 \\
    0.0541 \\
    \end{bmatrix}, \\
    &A_2^m = \begin{bmatrix}
    0.1003 & -0.1002 \\
    -0.2002 & 0.2997 \\
    \end{bmatrix}, \quad
    K_2^m = \begin{bmatrix}
    -0.0116 \\
    0.0578 \\
    \end{bmatrix} \\
    &C^m = \begin{bmatrix} -10 & -0.0116 \\ \end{bmatrix}, \quad \text{and} \quad F^m = 1 \\
    \end{aligned}
\end{equation*}
The two asLPV-SSA systems $\mathcal{S} = (\{A_{\sigma},K_{\sigma}\}_{\sigma=1}^{\pdim},C,F,\vb)$ and $\mathcal{S}^m =(\{A^m_{\sigma},K^m_{\sigma}\}_{\sigma=1}^{\pdim},C^m,F^m,\eb)$ are not isomorphic. 
In fact, they have different output trajectories, when using the scheduling process $\p^{'}$ from Example \ref{sim:examp}. This is due to the fact that the system $\mathcal{S}$ is not in innovation form. 
\end{Example}
From Examples \ref{sim:examp} and \ref{sim:examp2}, we can conclude that minimality alone does not provide uniqueness.

\begin{Example}
\label{sim:examp3}
We present a minimal asLPV-SSA in innovation form, where the state dimension $\nx = 2$ and its matrices are as follows:
\begin{equation*}
\footnotesize
\begin{aligned}
    &A_1 = \begin{bmatrix}
    0.4 & 0.4 \\
    0.2 & 0.1 \\
    \end{bmatrix}, \quad
    A_2 = \begin{bmatrix}
    0.1 & 0.1 \\
    0.2 & 0.3 \\
    \end{bmatrix} \\
    &K_1 = \begin{bmatrix}
    0 \\
    1 \\
    \end{bmatrix}, \quad
    K_2 = \begin{bmatrix}
    0 \\
    1 \\
    \end{bmatrix}, \quad
    C = \begin{bmatrix} 1 & 0 \\ \end{bmatrix}, \quad \text{and} \quad F = 1 \\
    \end{aligned}
\end{equation*}
The asLPV-SSA above is stably invertable, and hence in innovation form.
We use the same scheduling process $\p$ and noise process $\vb$  as Example \ref{sim:examp}.
If we apply Algorithm \ref{alg:min} to the system above, we get 
another asLPV-SSA with the matrices
\begin{equation*}
\footnotesize
\begin{aligned}
    &A^m_1 = \begin{bmatrix}
    0.4642 & -0.3581 \\
    -0.1581 & 0.0358 \\
    \end{bmatrix}, \quad
    K^m_1 = \begin{bmatrix}
    -0.1143 \\
    0.9934 \\
    \end{bmatrix} \\
    &A^m_2 = \begin{bmatrix}
    0.1367 & -0.1188 \\
    -0.2188 & 0.2633 \\
    \end{bmatrix}, \quad
    K^m_2 = \begin{bmatrix}
    -0.1143 \\
    0.9934 \\
    \end{bmatrix} \\
    &C^m = \begin{bmatrix} -0.9934 & -0.1143 \\ \end{bmatrix}, \quad \text{and} \quad F^m = 1 \\
    \end{aligned}
\end{equation*}
As expected, the two systems are isomorphic, the corresponding matrix is
\begin{equation*}
\footnotesize
    T = \begin{bmatrix} -0.9934 & -0.1143 \\ -0.1143 & 0.9934 \\ \end{bmatrix}
\end{equation*}

Finally, we realize that the output trajectories of both systems are indeed the same not only for the chosen scheduling sequence, but also for \emph{any} other scheduling process.
\end{Example}
\section{Conclusion}
This paper formulates conditions for a LPV state-space representation  to be minimal and in innovation form.
These conditions depend only the matrices of the LPV
representation. A minimization algorithm for transforming any LPV representation to a minimal one in innovation form is formulated too.
These results are expected to be useful for system identification, in particular, for making the identification problem mathematically well-posed.
In the future, we will explore the application of the proposed results to concrete system identification algorithms.
\bibliographystyle{plain}
\bibliography{Bib.bib}

\begin{thebibliography}{10}

\bibitem{Alkhoury2016}
Z.~Alkhoury, M.~Petreczky, and G.~Merc{\`e}re.
\newblock Structural properties of affine {LPV} to {LFR} transformation:
  minimality, input-output behavior and identifiability.
\newblock In {\em Proceedings of the IEEE Conference on Decision and Control},
  Las Vegas, USA, 2016.

\bibitem{bagi}
B.~A.\ Bamieh and L.\ Giarr\'e.
\newblock Identification of linear parameter-varying models.
\newblock {\em International Journal of Robust Nonlinear Control},
  12(9):841--853, 2002.

\bibitem{Bilingsley}
P.~Bilingsley.
\newblock {\em Probability and measure}.
\newblock Wiley, 1986.

\bibitem{CHIUSO2005377}
Alessandro Chiuso and Giorgio Picci.
\newblock Consistency analysis of some closed-loop subspace identification
  methods.
\newblock {\em Automatica}, 41(3):377--391, 2005.
\newblock Data-Based Modelling and System Identification.

\bibitem{CoxLPVSS}
P.~Cox, M.~Petreczky, and R.\ T{\'o}th.
\newblock Towards efficient maximum likelihood estimation of {LPV-SS} models.
\newblock {\em Automatica}, 97(9):392--403, 2018.

\bibitem{CoxTothSubspace}
Pepijn~Bastiaan Cox and Roland Tóth.
\newblock Linear parameter-varying subspace identification: A unified
  framework.
\newblock {\em Automatica}, 123:109296, 2021.

\bibitem{RamosSubspace}
P.L. dos Santos, J.A. Ramos, and J.L.M. de~Carvalho.
\newblock Identification of bilinear systems with white noise inputs: An
  iterative deterministic-stochastic subspace approach.
\newblock {\em IEEE Transactions on Control Systems Technology},
  17(5):1145--1153, Sept 2009.

\bibitem{FavoreelTAC}
W.~Favoreel, B.~De~Moor, and P.~Van~Overschee.
\newblock Subspace identification of bilinear systems subject to white inputs.
\newblock {\em IEEE Transactions on Automatic Control}, 44(6):1157--1165, Jun
  1999.

\bibitem{fewive}
F.\ Felici, J.~W.\ {Van Wingerden}, and M.\ Verhaegen.
\newblock Subspace identification of {MIMO} {LPV} systems using a periodic
  scheduling sequence.
\newblock {\em Automatica}, 43:1684--1697, 2007.

\bibitem{Katayama:05}
T.~Katayama.
\newblock {\em Subspace Methods for System Identification}.
\newblock Springer-Verlag, 2005.

\bibitem{TothKulcsar}
N.~Kulcs\'ar and T\'oth R.
\newblock On the similarity state transformation for linear parameter-varying
  systems.
\newblock In {\em Proc. 18th IFAC World Congress}, 2011.

\bibitem{laurainrefined}
V.~Laurain, M.~Gilson, R.~T{\'o}th, and H.~Garnier.
\newblock Refined instrumental variable methods for identification of {LPV
  Box--Jenkins} models.
\newblock {\em Automatica}, 46(6):959--967, 2010.

\bibitem{LindquistBook}
A.~Lindquist and G.~Picci.
\newblock {\em Linear Stochastic Systems: A Geometric Approach to Modeling,
  Estimation and Identification}.
\newblock Springe Berlin, 2015.

\bibitem{MejariLPVS2019}
M.~Mejari and M.~Petreczky.
\newblock {Realization and identification algorithm for stochastic LPV
  state-space models with exogenous inputs}.
\newblock In {\em {3rd IFAC Workshop on Linear Parameter-Varying Systems}},
  Eindhoven, Netherlands, 2019.

\bibitem{MEJARIAuto}
M.~Mejari, D.~Piga, and A.~Bemporad.
\newblock A bias-correction method for closed-loop identification of {L}inear
  {P}arameter-{V}arying systems.
\newblock {\em Automatica}, 87:128--141, 2018.

\bibitem{MejariCDC2019}
Manas Mejari and Mihaly Petreczky.
\newblock Consistent and computationally efficient estimation for stochastic
  lpv state-space models: realization based approach.
\newblock In {\em 2019 IEEE 58th Conference on Decision and Control (CDC)},
  pages 3805--3810. IEEE, 2019.

\bibitem{PetreczkyRealChapter}
M.~Petreczky.
\newblock Realization theory of linear hybrid systems.
\newblock In {\em Hybrid Dynamical Systems: Observation and control}, Lecture
  Notes in Control and Information Sciences. Springer-Verlag, 2015.

\bibitem{HSCC2010}
M.~Petreczky, L.~Bako, and J.H. van Schuppen.
\newblock Identifiability of discrete-time linear switched systems.
\newblock In {\em Hybrid Systems: Computation and Control}, pages 141--150.
  ACM, 2010.

\bibitem{PetreczkyLPVSS}
M.~Petreczky, R.\ T{\'o}th, and G.~Merc{\'e}re.
\newblock Realization theory for {LPV} state-space representations with affine
  dependence.
\newblock {\em IEEE Transactions on Automatic Control}, 62(9):4667--4674, 2017.

\bibitem{PetreczkyBilinear}
M.~Petreczky and R.~Vidal.
\newblock Realization theory for a class of stochastic bilinear systems.
\newblock {\em IEEE Transactions on Automatic Control}, 63(1):69--84, 2018.

\bibitem{PetreczkyNAHS}
M.~Petreczky, R.~Wisniewski, and J.~Leth.
\newblock Balanced truncation for linear switched systems.
\newblock {\em Nonlinear Analysis: Hybrid Systems}, 10:4--20, November 2013.

\bibitem{pigalpv}
D.\ Piga, P.\ Cox, R.\ T{\'o}th, and V.\ Laurain.
\newblock {LPV} system identification under noise corrupted scheduling and
  output signal observations.
\newblock {\em Automatica}, 53:329--338, 2015.

\bibitem{Ribarits2002}
T.~Ribarits.
\newblock {\em The role of parametrizations in identification of linear dynamic
  systems}.
\newblock PhD thesis, Vienna University of Technology, Vienna, Austria, 2002.

\bibitem{tanelliidentification}
M.\ Tanelli, D.\ Ardagna, and M.\ Lovera.
\newblock Identification of {LPV} state space models for autonomic web service
  systems.
\newblock {\em IEEE Transactions on Control Systems Technology}, 19(1):93--103,
  2011.

\bibitem{toth}
R.\ T{\'o}th.
\newblock {\em Modeling and identification of linear parameter-varying
  systems}.
\newblock Lecture Notes in Control and Information Sciences, Vol. 403,
  Springer, Heidelberg, 2010.

\bibitem{Wingerden09}
J.~W. {van Wingerden} and M.~Verhaegen.
\newblock Subspace identification of bilinear and {LPV} systems for open- and
  closed-loop data.
\newblock {\em Automatica}, 45(2):372--381, 2009.

\bibitem{Verdult02}
V.~Verdult and M.~Verhaegen.
\newblock Subspace identification of multivariable linear parameter-varying
  systems.
\newblock {\em Automatica}, 38(5):805--814, 2002.

\bibitem{veve}
V.\ Verdult and M.\ Verhaegen.
\newblock Kernel methods for subspace identification of multivariable {LPV} and
  bilinear systems.
\newblock {\em Automatica}, 41:1557--1565, 2005.

\end{thebibliography}
\appendix
\subsection{Technical definitions}
\label{App:def}
\begin{Definition}[Admissible scheduling sequences]

	\label{asm:A1}	
A scheduling process $\p$ is called an \emph{Admissible scheduling sequence} if it satisfies the following properties: 
\begin{enumerate}
 \item
 \label{gen:filt:ass_new:cond1:def1}
  Denote by $\mathscr{F}^{\p,-}_{t}$ the $\sigma$-algebra generated by the random variables $\{\p(k) \}_{k < t}$. There exists positive numbers
  $\{p_{\sigma}\}_{\sigma \in \Sigma}$ such that for any $w,v \in \Sigma^{+}, \sigma, \sigma^{'} \in \Sigma$, $t \in \mathbb{Z}$:
  \begin{equation}
  \label{gen:filt:ass_new:cond1:def1:eq1}
  \begin{split}
      & E[\uw{w\sigma}(t)\uw{v\sigma^{'}}(t) \mid \mathscr{F}^{\p,-}_{t} ] =  \\
      & \left\{\begin{array}{rl}
                                                                          p_{\sigma} \uw{w}(t-1)\uw{v}(t-1) & \sigma=\sigma^{'}  \\
                                                                          0 & \mbox{otherwise} 
                                                                          \end{array}\right. \\
     &  E[\uw{w\sigma}(t)\uw{\sigma^{'}}(t) \mid \mathscr{F}^{\p,-}_{t}] = \\
     &   \left\{\begin{array}{rl}
                                                                          p_{\sigma} \uw{w}(t-1)  & \sigma=\sigma^{'} \mbox{ and }  \\
                                                                          0 & \mbox{otherwise}
                                                                          \end{array}\right. \\
     & E[\uw{\sigma}(t)\uw{\sigma^{'}}(t) \mid \mathscr{F}^{\p,-}_{t}] = \begin{array}{rl}
                                                                          0 & \mbox{if } \sigma \ne \sigma^{'} 
                                                                          \end{array}
  \end{split}
  \end{equation}
\item
 \label{gen:filt:ass_new:cond2:def1}
 There exist real numbers $\{\alpha_{\sigma}\}_{\sigma \in \Sigma}$ such that 
 $\sum_{\sigma \in \Sigma} \alpha_{\sigma} \uw{\sigma}(t)=1$ for all $t \in \mathbb{Z}$. 

  \item
  For each $w,v \in \Sigma^{+}$, the process $\begin{bmatrix} \uw{w}, \uw{v} \end{bmatrix}^T$ is wide-sense stationary. 
\end{enumerate}

\end{Definition}
Definition \ref{asm:A1} is a reformulation of \cite[Definition 1]{PetreczkyBilinear}, for $S \subseteq \Sigma \times \Sigma$.

\begin{Definition}[ZMWSSI, {\cite[Definition 2]{PetreczkyBilinear}}]\label{def:ZMWSSI}
	A stochastic process $(\r,\p)$ is \emph{Zero Mean Wide Sense Stationary} (ZMWSSI) if 
	\begin{enumerate}
		\item For $\timeset$, the $\sigma$-algebras generated by the variables $\{\mathbf{r}(k) \}_{k \leq t}$, $\{\msig(k) \}_{k < t, \sigSet}$ and $\{\msig(k) \}_{k \geq t, \sigSet},$ denoted by $\filtr$, $\filt$ and $\filtp$ respectively, are such that $\filtr$ and $\filtp$ are conditionally independent w.r.t. $\filt$.
		\item The processes $\{\mathbf{r}, \{\zwr \}_{\wordSet{+}} \}$  are zero mean, square integrable and are jointly wide sense stationary, i.e., 
		$\forall t,s,k \in \mathbb{Z},w,v \in \Sigma^{+}$, 
		\begin{align*}
		&   \expect{\mathbf{r}(t)} = 0, ~  \expect{\zwr(t)} =0  \\
		&	\expect{\r(t+k)(\zwr(s+k))^{T}} = \expect{\r(t) (\zwr(s))^{T}}, \\
		&	\expect{\r(t+k)(\r(s+k))^{T}} = \expect{\r(t)(\r(s))^{T}}, \\
		&	\expect{\zwr(t+k)(\zvr(s+k))^{T}} = \expect{\zwr(t) (\zvr(s))^{T}}. 
				\end{align*}
	\end{enumerate}
\end{Definition} 

\begin{Definition}[SII process {\cite[Definition 5]{PetreczkyBilinear}}]
\label{Defn:SII}
	A process $\r$ is said to be \emph{Square Integrable w.r.t. $\p$} (abbreviated as \emph{SII} when $\p$ is clear from the context), for all $\wordSet{*}, \timeset $,
	the random variable $\mathbf{z}^{\r+}_w (t)= \mathbf{r}(t+|w|) \p_{w}(t+|w|-1)\frac{1}{\sqrt{p_w}}$,
	is square integrable.
\end{Definition}


\subsection{Proofs of Theorems \ref{inv:gbs:lemma} and \ref{min:forw:gbs:lemma}}
\label{App:proof}

 First we show the following technical result, which states the following.
 \begin{Lemma}[Mean-square stability block-diagonal matrices]
 \label{proof:lemma-1}
   Consider matrices $\nx \times n$
   $\{B_{i=1}\}_{i=1}^{\pdim}$ and $n \times n$ matrices $\{F_{i}\}_{i=1}^{\pdim}$ and
 $\nx \times \nx$ matrices $\{A_i\}_{i=1}^{\pdim}$ such that 
  that $\sum_{i=1}^{\pdim} p_i (F_i \otimes F_i)$ and $\sum_{i=1}^{\pdim} p_i (A_i \otimes A_i$  are stable (all their eiegenvalues are inside the unit disk). 
  Conside the matrix
  \[ 
      \begin{split}
      &  \tilde{F}_i=\begin{bmatrix} F_i & B_i \\
                                  0   & A_i
                   \end{bmatrix} \\
      \end{split}              
    \]      
  $\sum_{i=1}^{\pdim} p_i (\tilde{F}_i \otimes \tilde{F}_i)$ is stable.  
 \end{Lemma}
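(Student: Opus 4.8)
The plan is to reduce everything to the standard Lyapunov (Stein-type) characterization of mean-square stability for positive linear maps, and then exhibit an explicit block-diagonal Lyapunov certificate for the $\tilde{F}_i$. Concretely, I will use the following fact (which can be cited from the literature on stochastic bilinear/switched systems, or proved in a few lines): for square matrices $\{M_i\}_{i=1}^{\pdim}$ and positive reals $\{p_i\}_{i=1}^{\pdim}$, the matrix $\sum_{i=1}^{\pdim} p_i (M_i \otimes M_i)$ is stable if and only if there exists a symmetric positive definite $P$ with $\sum_{i=1}^{\pdim} p_i M_i P M_i^{T} \prec P$. For the ``only if'' direction one takes $P = \sum_{k=0}^{\infty}\mathcal{L}^{k}(I)$, where $\mathcal{L}(X)=\sum_i p_i M_i X M_i^{T}$, the series converging geometrically because $\rho(\mathcal{L})<1$; then $\mathcal{L}(P)=P-I\prec P$ and $P\succeq I\succ 0$. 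For the ``if'' direction one uses that $\mathcal{L}$ is monotone on the cone of positive semidefinite matrices: from $\mathcal{L}(P)\preceq cP$ for some $c\in(0,1)$ one gets $\mathcal{L}^{k}(P)\preceq c^{k}P\to 0$, hence $\mathcal{L}^{k}\to 0$ on semidefinite matrices and, after writing an arbitrary argument as a difference of (complex Hermitian) semidefinite ones, on the whole space of $n\times n$ matrices; therefore $\rho\big(\sum_i p_i M_i\otimes M_i\big)=\rho(\mathcal{L})<1$.

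Given this, the argument runs as follows. Applying the ``only if'' direction to $\{F_i\}_{i=1}^{\pdim}$ and to $\{A_i\}_{i=1}^{\pdim}$ yields $P_1=P_1^{T}\succ 0$ and $P_2=P_2^{T}\succ 0$ with $\Delta_1:=P_1-\sum_i p_i F_i P_1 F_i^{T}\succ 0$ and $\Delta_2:=P_2-\sum_i p_i A_i P_2 A_i^{T}\succ 0$. I then take the candidate certificate $\tilde{P}=\begin{bmatrix} tP_1 & 0 \\ 0 & P_2\end{bmatrix}$ with a scalar $t>0$ to be fixed. A direct block multiplication gives
\[
\tilde{P} - \sum_{i=1}^{\pdim} p_i \tilde{F}_i \tilde{P} \tilde{F}_i^{T} = \begin{bmatrix} t\Delta_1 - R & -S \\ -S^{T} & \Delta_2 \end{bmatrix},
\]
where $R:=\sum_i p_i B_i P_2 B_i^{T}\succeq 0$ and $S:=\sum_i p_i B_i P_2 A_i^{T}$ do not depend on $t$. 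Since $\Delta_2\succ 0$, the Schur-complement criterion shows this matrix is positive definite as soon as $t\Delta_1\succ R+S\Delta_2^{-1}S^{T}$, which holds for all sufficiently large $t$ because $\Delta_1\succ 0$ and the right-hand side is a fixed matrix. Fixing such a $t$ gives $\sum_i p_i \tilde{F}_i \tilde{P} \tilde{F}_i^{T}\prec \tilde{P}$ with $\tilde{P}\succ 0$, and the ``if'' direction of the fact above then yields that $\sum_i p_i (\tilde{F}_i\otimes \tilde{F}_i)$ is stable, as required.

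The delicate point, and the one I expect to be the main obstacle, is the Lyapunov characterization itself, specifically the ``if'' direction for non-symmetric arguments: one must be careful to work on the full space of $n\times n$ matrices rather than only on the symmetric ones, since the eigenvalue condition in the statement refers to the $n^{2}\times n^{2}$ Kronecker matrix $\sum_i p_i M_i\otimes M_i$. If one prefers to avoid invoking this characterization, an essentially self-contained alternative is available: in the ordered basis $(X_{22},X_{12},X_{21},X_{11})$ for block-partitioned matrices, the map $X\mapsto\sum_i p_i\tilde{F}_i X\tilde{F}_i^{T}$ is block lower-triangular with diagonal blocks $X_{22}\mapsto\sum_i p_i A_i X_{22}A_i^{T}$, $X_{12}\mapsto\sum_i p_i F_i X_{12}A_i^{T}$, its transpose acting on $X_{21}$, and $X_{11}\mapsto\sum_i p_i F_i X_{11}F_i^{T}$; hence its spectral radius is the maximum of the spectral radii of these four blocks. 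Three of them equal $\rho(\sum_i p_i A_i\otimes A_i)$ or $\rho(\sum_i p_i F_i\otimes F_i)$ and are thus $<1$ by hypothesis, while the remaining one, $\rho(\sum_i p_i F_i\otimes A_i)$, is bounded by $\sqrt{\rho(\sum_i p_i F_i\otimes F_i)\,\rho(\sum_i p_i A_i\otimes A_i)}<1$ through a Cauchy--Schwarz estimate on $\big\|\sum_{|w|=k} p_w F_{w} X A_{w}^{T}\big\|$ combined with the exponential decay of $\operatorname{tr}\big((\sum_i p_i F_i\otimes F_i)^{k}\operatorname{vec}(I)\big)$ and its analogue for $A$.
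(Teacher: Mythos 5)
Your proof is correct and follows essentially the same route as the paper's: both establish the result via the Stein/Lyapunov characterization of stability of $\sum_i p_i(M_i\otimes M_i)$ and then exhibit a block-diagonal certificate $\mathrm{diag}(P_1,P_2)$ whose positivity is verified by a Schur complement on $\Delta_2=P_2-\sum_i p_i A_iP_2A_i^T$; your scalar $t$ plays exactly the role of the paper's choice of $P_1$ solving the Lyapunov inequality with margin $S=R+\mathscr{D}\Delta_2^{-1}\mathscr{D}^T$. The only difference is that the paper cites the Lyapunov equivalence from the literature while you sketch its proof (and offer a block-triangularity alternative), which is a harmless elaboration rather than a different argument.
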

 \begin{proof}
    Recall from \cite[Lemma 5]{PetreczkyNAHS} it follows that
    the stability of $\sum_{i=1}^{\pdim} p_i (\tilde{F}_i \otimes \tilde{F}_i)$
    is equivalent to the existence of a matrix $P > 0$ such that
    $P - \sum_{i=1}^{\pdim} p_i \tilde{F}_iP\tilde{F}_i^T > 0$.
    We will construct such a matrix $P$. To this end, 
    from \cite[Lemma 5]{PetreczkyNAHS} it follows that 
    there exist $P_2 > 0$ such that 
    $P_2-\sum_{i=1}^{\pdim} p_i A_iP_2A_i^T > 0$. Define
    \[ S=\sum_{i=1}^{\pdim} p_i(B_iP_2B_i^T) + \mathscr{D}(P_2-\sum_{i=1}{\pdim} p_i A_iP_2A_i^T)^{-1}\mathscr{D}^T \]
    where $\mathscr{D}=(\sum_{i=1}^{\pdim} p_i B_iP_2A_i^T$.
    It then follows that $S \ge 0$, as $(P_2-\sum_{i=1}{\pdim} p_i A_iP_2A_i^T)^{-1}$ is
    positive 
    definite and $B_iP_2B_i^T$ are postive semi-definite matrices for each $i=1,\ldots,\pdim$.
    From \cite[Lemma 5]{PetreczkyNAHS} it follows that there exist $P_1 > 0$ such that
    \begin{equation}
    \label{proof:lemma-1:eq1}
    P_1-\sum_{i=1}^{\pdim} F_iP_1F_i^T - S >  0 
    \end{equation}
    Consider $P=\begin{bmatrix} P_1 & 0 \\ 0 & P_2 \end{bmatrix}$.
    Using standard calculation, it follows that
    \[
      \begin{split}
       & P-\sum_{i=1}^{\pdim} p_i\tilde{F}_iP\tilde{F}_i^T = M(P) \\
      & M(P)= \\
      &\begin{bmatrix}
     P_1-\sum_{i=1}^{\pdim} p_i F_iP_1F_i^T - \mathcal{D}_1 & - \mathscr{D} \\
       -\mathscr{D}^T & P_2-\sum_{i=1}{\pdim} p_i A_iP_2A_i^T
       \end{bmatrix}
       \end{split}
    \]
    $\mathscr{D}=\sum_{i=1}^{\pdim} p_i B_iP_2A_i^T$, 
    $\mathcal{D}_1=\sum_{i=1}^{\pdim} p_i(B_iP_2B_i^T)$.
    Note that $P_2-\sum_{i=1}{\pdim} p_i A_iP_2A_i^T > 0$, hence, by using Schur complement,
    $M(P) > 0$ if  \eqref{proof:lemma-1:eq1} holds, which is true by the choice of $P_1$.
    \end{proof}
 \begin{Lemma}[Orthogonal noises]
 \label{proof:lemma-3}
  Let $\vb$ be a ZMWSII such that $E[\vb(t)\z^{\vb}_w(t)]=0$, $w \in \Sigma^{+}$. Then 
  $\z_w(t)^{\vb}(t), \z_v^{\vb}(t)$ are uncorrelated for all $w,v \in \Sigma^{+}$, $v \ne w$.
 \end{Lemma}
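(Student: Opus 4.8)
\textbf{Proof proposal for Lemma \ref{proof:lemma-3} (Orthogonal noises).}

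The plan is to reduce the statement to the case of a common suffix and then use the white-noise property of $\vb$. Recall from the discussion after Definition~\ref{def:ZMWSSI} that for any ZMWSII process, $\z_w^{\vb}(t)$ and $\z_v^{\vb}(t)$ are already orthogonal whenever neither of $w,v$ is a suffix of the other; so the only case requiring work is when, say, $w$ is a suffix of $v$, i.e.\ $v = s w$ for some nonempty $s \in \Sigma^{+}$ (the sub-case $w = v$ is excluded by hypothesis, and the case $v$ suffix of $w$ is symmetric). First I would write out $\z_w^{\vb}(t)$ and $\z_v^{\vb}(t) = \z_{sw}^{\vb}(t)$ explicitly using \eqref{eqn:zwu}: both contain the factor $\vb(t-|w|)\uw{w}(t-1)$ coming from the shared suffix $w$, while $\z_{sw}^{\vb}(t)$ carries in addition the ``older'' scheduling factors encoded in $\uw{s}$ evaluated at times strictly before $t-|w|$, together with the normalizing constants $1/\sqrt{p_w}$ and $1/\sqrt{p_{sw}}$.

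The key step is then to compute $E[\z_w^{\vb}(t)(\z_{sw}^{\vb}(t))^{T}]$ by conditioning on an appropriate $\sigma$-algebra, or equivalently by invoking the covariance formula displayed just before Definition~\ref{def:ZMWSSI}: when $w$ is a suffix of $v = sw$, one has
\[
E[\z_w^{\vb}(t)(\z_v^{\vb}(t))^{T}] = E[\vb(t)(\z_s^{\vb}(t))^{T}].
\]
Now $|s| \ge 1$ since $v \ne w$, so $\z_s^{\vb}(t)$ is built from $\vb(t-|s|)$ with $|s| \ge 1$, i.e.\ from a strictly past value of $\vb$ multiplied by past scheduling terms. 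By the hypothesis $E[\vb(t)(\z_w^{\vb}(t))^{T}] = 0$ for all $w \in \Sigma^{+}$ (this is exactly condition \textbf{(2)} of Definition~\ref{defn:LPV_SSA_wo_u}, i.e.\ $\vb$ is a white noise orthogonal to its weighted past), we get $E[\vb(t)(\z_s^{\vb}(t))^{T}] = 0$, hence $\z_w^{\vb}(t)$ and $\z_v^{\vb}(t)$ are uncorrelated. The symmetric case $w = sv$ is handled identically by transposing.

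The main obstacle, as I see it, is purely bookkeeping: making sure the time-shift indices and the normalization constants $p_w, p_{sw}$ line up so that the shared-suffix factors genuinely factor out and leave exactly $E[\vb(t)(\z_s^{\vb}(t))^{T}]$, rather than some rescaled or time-shifted variant. This is where one must be careful to use wide-sense stationarity (so that the covariance does not depend on $t$) and the multiplicativity $p_{sw} = p_s p_w$ from \eqref{eqn:uw1}. Once the algebra is arranged correctly, the conclusion is immediate from condition \textbf{(2)} of Definition~\ref{defn:LPV_SSA_wo_u}; no new estimates or limiting arguments are needed.
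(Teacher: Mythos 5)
Your proposal is correct and follows essentially the same route as the paper's own proof: dispose of the non-suffix case via the ZMWSII orthogonality property, then for $v = sw$ with $s \neq \epsilon$ invoke the suffix covariance identity $E[\z_w^{\vb}(t)(\z_v^{\vb}(t))^{T}] = E[\vb(t)(\z_s^{\vb}(t))^{T}]$ and conclude from the hypothesis $E[\vb(t)(\z^{\vb}_s(t))^{T}]=0$. The bookkeeping over normalization constants that you flag is already absorbed into that displayed covariance formula, so no further work is needed.
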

 \begin{proof}[Proof Lemma \ref{proof:lemma-3}]
  Clearly $E[\z_w^{\vb}(t)(\z_v^{\vb}(t))^T]$ is either zero if $w$ is not a suffix of $v$, or if
  $v$ is not a suffix of $w$, as it is ZMWSII.  If say $v=sw$, $s \ne \epsilon$ then 
  $E[\z_w^{\vb}(t)(\z_v^{\vb}(t))^T]=E[\vb(t)\z_s^{\vb}(t)]=0$. 
 \end{proof}
 \begin{Lemma}[Absolute convergence infinite sums]
 \label{proof:lemma-2}
 Consider a $p \times n$ matrix $C$ and matrices $n \times \ny$
   $\{B_{i=1}\}_{i=1}^{\pdim}$ and $n \times n$ matrices $\{F_{i}\}_{i=1}^{\pdim}$ such that 
  that $\sum_{i=1}^{\ny} p_i (F_i \otimes F_i)$ is stable. 
  Let $\vb$ be a ZMWSII such that $E[\vb(t)(\z^{\vb}_w(t))^T]=0$, $w \in \Sigma^{+}$.
  Then the infinite sum
  \begin{equation}
   \label{proof:lemma-2:eq1}
    \r(t)=\sum_{w \in \Sigma^{*},\sigma \in \Sigma} \sqrt{p_{\sigma w}} CF_wB_{\sigma}\z_{\sigma w}^{\vb}(t)
  \end{equation}
 is absolutely convergent in the mean-square sense, i.e., 
 \begin{equation}
   \label{proof:lemma-2:eq2}
    \sum_{w \in \Sigma^{*},\sigma \in \Sigma} E[\|CF_wB_{\sigma}\z_{\sigma w}^{\vb}(t)\|^2_2]
  \end{equation}
  and the process $\r(t)$ is ZMWSII and it the unique
  state process of the asLPV-SSA $(\{F_i,B_i\}_{i=1}^{\pdim},C,I,\vb)$.
 \end{Lemma}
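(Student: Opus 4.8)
The plan is to read Lemma \ref{proof:lemma-2} as the converse of \cite[Lemma 2]{PetreczkyBilinear}: rather than starting from an asLPV-SSA and reading off its state, I would start from matrices obeying the stability hypothesis and \emph{construct} the candidate state process as the infinite $L^2$-sum appearing in \eqref{proof:lemma-2:eq1}. Precisely, I would set $\xb(t):=\sum_{w\in\Sigma^{*},\sigma\in\Sigma}\sqrt{p_{\sigma w}}\,F_{w}B_{\sigma}\z^{\vb}_{\sigma w}(t)$, so that $\r(t)=C\xb(t)$, and then prove in turn: (i) $\xb(t)$ is a well-defined element of $L^2$ and the series converges absolutely in the mean-square sense; (ii) $\xb$ solves the state recursion of $(\{F_{i},B_{i}\}_{i=1}^{\pdim},C,I,\vb)$; (iii) $\xb$ (hence $\r$) is ZMWSII w.r.t. $\p$; (iv) $\xb$ is the unique ZMWSII state process, so $\r=C\xb$ is determined.

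For (i), the key observation is that the summands are pairwise orthogonal: re-indexing the pairs $(w,\sigma)$ by the words $u=\sigma w\in\Sigma^{+}$ and applying Lemma \ref{proof:lemma-3} gives $E[\z^{\vb}_{u}(t)(\z^{\vb}_{u'}(t))^{T}]=0$ for $u\neq u'$, so all cross terms vanish. The ZMWSII covariance identity recalled in Section \ref{sect:prelim} yields $E[\z^{\vb}_{\sigma w}(t)(\z^{\vb}_{\sigma w}(t))^{T}]=\tfrac1{p_{\sigma}}Q_{\sigma}$ with $Q_{\sigma}=E[\vb(t)\vb^{T}(t)\p_{\sigma}^{2}(t)]\succeq 0$; summing the squared norms of the length-$(j{+}1)$ terms over $\sigma$ and over all $w$ with $|w|=j$ gives $\operatorname{tr}\!\big(C\,\mathcal{L}^{j}(M)\,C^{T}\big)$, where $M=\sum_{\sigma}B_{\sigma}Q_{\sigma}B_{\sigma}^{T}\succeq 0$ and $\mathcal{L}(X)=\sum_{i}p_{i}F_{i}XF_{i}^{T}$. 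Since $\operatorname{vec}(\mathcal{L}^{j}(M))=\big(\sum_{i}p_{i}F_{i}\otimes F_{i}\big)^{j}\operatorname{vec}(M)$ and $\sum_{i}p_{i}F_{i}\otimes F_{i}$ is stable, this decays geometrically in $j$, so $\sum_{w,\sigma}E[\|\sqrt{p_{\sigma w}}\,CF_{w}B_{\sigma}\z^{\vb}_{\sigma w}(t)\|_{2}^{2}]<\infty$; together with orthogonality this makes the partial sums Cauchy in $L^2$, establishing (i).

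For (ii), I would substitute the series into $\sum_{i}(F_{i}\xb(t)+B_{i}\vb(t))\p_{i}(t)$: the $w=\epsilon$ term of the series for $\xb(t+1)$ reproduces $\sum_{i}B_{i}\vb(t)\p_{i}(t)$, while the remaining terms, using $F_{i}F_{v}=F_{vi}$, the factorization $\uw{\sigma v}(s)=\p_{\sigma}(s-|v|)\uw{v}(s)$, and the definition \eqref{eqn:zwu} of $\z^{\vb}_{\bullet}$, reassemble exactly into $\sum_{i}F_{i}\xb(t)\p_{i}(t)$ — this is the combinatorial identity of \cite[Lemma 2]{PetreczkyBilinear}, and the interchange of the $L^2$-limit with these operations is legitimate because all the maps involved are bounded on $L^2$. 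For (iii), each partial sum is a finite linear combination of products $\vb(t-j)\uw{w}(t-1)$ and is therefore ZMWSII w.r.t. $\p$ (since $(\vb,\p)$ is), and the conditional-independence and joint wide-sense-stationarity conditions of Definition \ref{def:ZMWSSI} pass to the $L^2$-limit, because convergence in $L^2$ preserves second moments and conditional expectations. For (iv), if $\xb'$ is another ZMWSII state process then $\delta=\xb-\xb'$ obeys $\delta(t+1)=\sum_{i}F_{i}\delta(t)\p_{i}(t)$; iterating $k$ steps and using orthogonality gives $E[\|\delta(t)\|_{2}^{2}]=\operatorname{tr}\!\big(\mathcal{L}^{k}(E[\delta(t-k)\delta^{T}(t-k)])\big)\to 0$ as $k\to\infty$ by stability of $\sum_{i}p_{i}F_{i}\otimes F_{i}$ and wide-sense stationarity of $\delta$, so $\delta\equiv 0$.

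The convergence estimate in (i) and the uniqueness argument in (iv) are routine once the covariance identity is in hand; the main obstacle is the bookkeeping in (ii)–(iii): making the re-indexing in the recursion precise, justifying that the now-infinite sums commute with the $L^2$-limit, and checking that the ZMWSII conditions genuinely survive the limit. In effect this re-derives \cite[Lemma 2]{PetreczkyBilinear} in the present setting, so I would cite that reference for the combinatorial identities while spelling out the $L^2$-continuity arguments explicitly.
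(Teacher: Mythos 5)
Your proof is correct, and it reaches the conclusion by a somewhat different route than the paper. The shared core is the orthogonality of the summands: both you and the paper invoke Lemma~\ref{proof:lemma-3} so that all cross terms in $E[\|\sum_{|w|\le N}\cdots\|_2^2]$ vanish and the second moment of a partial sum equals the sum of the term-wise second moments. Where you diverge is in how the finiteness of that sum is obtained. The paper imports mean-square convergence of \eqref{proof:lemma-2:eq1}, the ZMWSII property, and uniqueness of the state process wholesale from \cite[Lemma 3]{PetreczkyBilinear}, and then deduces absolute convergence \emph{from} mean-square convergence via the Parseval-type identity; the only thing it actually proves is the passage from one mode of convergence to the other. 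You instead prove absolute convergence directly and quantitatively, via the covariance identity $E[\z^{\vb}_{\sigma w}(t)(\z^{\vb}_{\sigma w}(t))^{T}]=Q_{\sigma}/p_{\sigma}$ and the geometric decay of $\mathrm{tr}(C\mathcal{L}^{j}(M)C^{T})$ under stability of $\sum_i p_i F_i\otimes F_i$, and then recover mean-square convergence, the recursion, ZMWSII, and uniqueness as consequences (still leaning on \cite[Lemma 2]{PetreczkyBilinear} for the combinatorial identities). Your version buys an explicit geometric rate and is self-contained where the paper delegates; the paper's version is shorter. One imprecision worth fixing: the interchange of the $L^2$-limit with the state recursion is not justified by saying the maps are ``bounded on $L^2$'' --- multiplication by $\p_i(t)$ is not a bounded operator on $L^2$ unless $\p$ is bounded. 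The correct justification is the admissibility/conditional-independence structure, which gives $E[\|(\xb^{(N)}-\xb)(t)\p_i(t)\|_2^2]=p_i\,E[\|(\xb^{(N)}-\xb)(t)\|_2^2]\to 0$ because $\xb^{(N)}-\xb$ is measurable with respect to the past of $(\vb,\p)$; the same remark applies to your uniqueness step, where wide-sense stationarity of $\delta$ alone is not quite enough and the joint ZMWSII property with $\p$ is what makes $E[\|\delta(t)\|_2^2]=\mathrm{tr}(\mathcal{L}^{k}(E[\delta(t-k)\delta^{T}(t-k)]))$ legitimate.
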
    
 \begin{proof}[Proof of Lemma \ref{proof:lemma-2}]
     From \cite[Lemma  3]{PetreczkyBilinear} it follows 
     that \eqref{proof:lemma-2:eq1} is convergent in mean square sense
     and $\r(t)$ is ZMWSII as it is the solution of the
     asLPV-SSA $(\{F_i,B_i\}_{i=1}^{\pdim},C,I,\vb)$.
     From Lemma \ref{proof:lemma-3} it follows that $\z_w(t)^{\vb}(t),\z_v^{\vb}(t)$ are 
     uncorrelated. Hence
     \[ 
     \begin{split}
     & E[\|\sum_{w \in \Sigma^{*},|w| \le N,\sigma \in \Sigma} CF_wB_{\sigma}\z_{\sigma w}^{\vb}(t)\|^2_2]= \\
     & trace \left(\sum_{w,v \in \Sigma^{*},|w|,|v| \le N,\sigma,\sigma_1 \in \Sigma}
       CA_wB_{\sigma}E[\z_w^{\vb}(t)(\z_v^{\vb}(t))^T](CA_vB_{\sigma_1})^T\right) = \\
     & \sum_{w \in \Sigma^{*}, |w| \le N, \sigma \in \Sigma} E[\|CF_wB_{\sigma}\z_{\sigma w}^{\vb}(t)\|^2_2]
     \end{split}
     \]
     and as $E[\|\sum_{w \in \Sigma^{*},|w| \le N,\sigma \in \Sigma} CF_wB_{\sigma}\z_{\sigma w}^{\vb}(t)\|^2_2]$ is convergent by mean-square convergence of \eqref{proof:lemma-2:eq1},
     it follows that \eqref{proof:lemma-2:eq2} is convergent. 
 \end{proof}
 \begin{Lemma}
 \label{proof:lemma1}
   Assume that $(\y,\p)$ has a asLPV-SSA realization. Consider matrices $n \times \ny$
   $\{B_{i=1}\}_{i=1}^{\pdim}$ and $n \times n$ matrices $\{F_{i}\}_{i=1}^{\pdim}$ such that 
  that $\sum_{i=1}^{\pdim} p_i (F_i \otimes F_i)$ is stable. Then the infinite sum
  \begin{equation}
   \label{proof:lemma1:eq1}
    \sum_{w \in \Sigma^{*},\sigma \in \Sigma} F_wB_{\sigma}\z_{\sigma w}^{\y}(t)
  \end{equation}
  converges absolutely in the mean-square sense and
  $\r(t)=\sum_{w \in \Sigma^{*},\sigma \in \Sigma} F_wB_{\sigma}\z_{\sigma w}^{\y}(t)$
  is a ZMWSII process, and $\r$ is the unique process $\bar{\r}$ which
  satisfies
  \begin{equation}
  \label{proof:lemma1:eq1.5}      
  \bar{\r}(t+1)=\sum_{i=1}^{\pdim} (F_i\bar{\r}(t)+B_i\y(t))\p_i(t)
  \end{equation}
  and $\begin{bmatrix} \bar{\r}^T & \y^T \end{bmatrix}$ is ZMWSII and the components of $\bar{\r}(t)$ belong to 
  the Hilbert-space $\mathcal{H}^{\y}_t$   generated by $\{\z_w^{\y}(t)\}_{w \in \Sigma^{+}}$.
\end{Lemma}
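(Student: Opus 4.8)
The plan is to reduce everything to Lemma \ref{proof:lemma-2} by embedding the recursion \eqref{proof:lemma1:eq1.5} into a larger asLPV-SSA driven by the noise of a realization of $(\y,\p)$. First I would fix an asLPV-SSA realization $\mathcal{S}'=(\{A_i',K_i'\}_{i=1}^{\pdim},C',F',\vb)$ of $(\y,\p)$, which exists by hypothesis; by Definition \ref{defn:LPV_SSA_wo_u} the noise $\vb$ is ZMWSII with $E[\vb(t)(\z^{\vb}_w(t))^T]=0$ for all $w\in\Sigma^+$, the matrix $\sum_{i=1}^{\pdim} p_i\,A_i'\otimes A_i'$ is stable, and $\y(t)=C'\xb'(t)+F'\vb(t)$, where $\xb'$ is the unique ZMWSII state process of $\mathcal{S}'$. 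Substituting this expression for $\y$ into \eqref{proof:lemma1:eq1.5} shows that the pair $\begin{bmatrix}\bar{\r}^T & (\xb')^T\end{bmatrix}^T$ must solve a state recursion of asLPV-SSA type with block upper-triangular matrices $\tilde F_i=\begin{bmatrix}F_i & B_iC' \\ 0 & A_i'\end{bmatrix}$, noise coefficients $\begin{bmatrix}B_iF' \\ K_i'\end{bmatrix}$, output matrix $\begin{bmatrix} I & 0\end{bmatrix}$, and noise $\vb$.

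Next I would apply Lemma \ref{proof:lemma-1} to the two stable matrices $\sum_i p_i\,F_i\otimes F_i$ (by hypothesis) and $\sum_i p_i\,A_i'\otimes A_i'$ to conclude that $\sum_i p_i\,\tilde F_i\otimes\tilde F_i$ is stable. Lemma \ref{proof:lemma-2} then applies to this joint asLPV-SSA: it yields a ZMWSII process which is the unique state process of that system and has an absolutely mean-square convergent series expansion in the products $\z^{\vb}$. Reading off the top block gives a process $\bar{\r}$ that solves \eqref{proof:lemma1:eq1.5} and for which $\begin{bmatrix}\bar{\r}^T & \y^T\end{bmatrix}$ is ZMWSII.

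To identify $\bar{\r}$ with the series \eqref{proof:lemma1:eq1} and to locate its components in $\mathcal{H}^{\y}_t$, I would unroll \eqref{proof:lemma1:eq1.5} $N$ times: using Notation \ref{not:product} and the definition \eqref{eqn:zwu} of $\z^{\y}_w$, the result is a finite sum over words $w$ with $|w|<N$ of terms $F_wB_\sigma\z^{\y}_{\sigma w}(t)$ (up to the normalizing $\sqrt{p}$ factors) plus a remainder which is a sum over $|w|=N$ of $F_w\bar{\r}(t-N)$ times scheduling products. Since $\bar{\r}$ is wide-sense stationary, $E[\|\bar{\r}(t-N)\|^2]$ is bounded; since $\bar{\r}$ is ZMWSII, the remainder terms for distinct $w$ of equal length are orthogonal, so the remainder has mean-square norm bounded by $\mathrm{const}\cdot\sum_{|w|=N}p_w\|F_w\|^2$, which tends to $0$ because $\sum_{|w|=N}p_w(F_w\otimes F_w)=(\sum_i p_i F_i\otimes F_i)^N\to 0$ by stability. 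Hence the partial sums converge to $\bar{\r}(t)$, which yields the series form; since each $\z^{\y}_{\sigma w}(t)$ has its coordinates in the closed subspace $\mathcal{H}^{\y}_t$, so does $\bar{\r}(t)$. Absolute mean-square convergence of \eqref{proof:lemma1:eq1} follows from the covariance formula for ZMWSII processes, which bounds $E[\|\z^{\y}_{\sigma w}(t)\|^2]$ uniformly in $w$, combined with the same geometric decay of $\sum_{|w|=N}p_w(F_w\otimes F_w)$. Finally, for uniqueness, if $\bar{\r}_1,\bar{\r}_2$ both satisfy the three conditions, their difference $\delta$ obeys the homogeneous recursion $\delta(t+1)=\sum_i F_i\delta(t)\p_i(t)$ and is wide-sense stationary, so the same unrolling and orthogonality argument gives $E[\|\delta(t)\|^2]\le\mathrm{const}\cdot\sum_{|w|=N}p_w\|F_w\|^2\to 0$, whence $\delta\equiv 0$.

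The main obstacle is the bookkeeping in the unrolling step: matching the reversed matrix products $F_w$ of Notation \ref{not:product} with the time-shifted scheduling products $\uw{w}$ hidden inside $\z^{\y}_{\sigma w}$, and tracking the $\sqrt{p_w}$ normalizations; and then rigorously justifying that the remainder vanishes, which hinges on the equivalence between stability of $\sum_i p_i F_i\otimes F_i$ and the geometric decay of $\sum_{|w|=N}p_w(F_w\otimes F_w)$, available through \cite[Lemma 5]{PetreczkyNAHS} and the cited bilinear realization results.
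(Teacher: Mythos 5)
Your construction of the joint system is exactly the paper's: you form the block upper-triangular matrices $\tilde F_i=\begin{bmatrix}F_i & B_iC'\\ 0 & A_i'\end{bmatrix}$, invoke Lemma \ref{proof:lemma-1} for stability of $\sum_i p_i\,\tilde F_i\otimes\tilde F_i$, and Lemma \ref{proof:lemma-2} for the existence, ZMWSII property and uniqueness of the joint state process, whose top block is the candidate $\bar{\r}$. Where you diverge is in (a) the choice of realization and (b) the identification and uniqueness steps. The paper takes $\mathcal{S}'$ to be a \emph{minimal realization in innovation form}, which buys it the identity $\mathcal{H}^{\y}_t=\mathcal{H}^{\vb}_t$; it then identifies $\bar{\r}$ with the series \eqref{proof:lemma1:eq1} by \emph{rearranging} the absolutely convergent $\z^{\vb}$-expansion of the joint state using the explicit formula $\z^{\y}_v(t)=\sum_{w,\sigma}CA_wK_\sigma\z^{\vb}_{\sigma wv}(t)+D\z^{\vb}_v(t)$, and it obtains uniqueness by verifying the hypotheses of the cited uniqueness result for state processes of the joint system, which requires $E[\vb(t)(\z^{\bar{\r}}_w(t))^T]=0$ and hence the innovation form. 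You instead work with an arbitrary realization, identify the series by unrolling the recursion $N$ steps and killing the remainder via orthogonality plus the geometric decay of $\sum_{|w|=N}p_w(F_w\otimes F_w)$, and prove uniqueness by the same unrolling applied to the homogeneous difference equation. Your route is more elementary and avoids the (strictly speaking unjustified under the lemma's stated hypotheses) assumption that a minimal innovation-form realization exists.

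The one step you should tighten is the orthogonality claim in the uniqueness argument. For the identification step it is fine: there $\bar{\r}$ is the specific top block delivered by Lemma \ref{proof:lemma-2}, hence ZMWSII jointly with $\p$, so the terms $F_w\bar{\r}(t-N)\uw{w}(t-1)=\sqrt{p_w}\,F_w\z^{\bar{\r}}_w(t)$ for distinct $w$ of equal length are indeed orthogonal with uniformly bounded second moments. But in the uniqueness step you apply the same reasoning to $\delta=\bar{\r}_1-\bar{\r}_2$, and the fact that each $\bar{\r}_k$ is separately ZMWSII does \emph{not} by itself give the joint second-order structure of the pair, hence not the vanishing of the cross terms $E[\delta_i(t-N)\delta_j(t-N)\uw{w}(t-1)\uw{v}(t-1)]$ for $w\neq v$. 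To close this you must use the third hypothesis --- that the components of $\bar{\r}_k(t-N)$ lie in $\mathcal{H}^{\y}_{t-N}$, hence are measurable with respect to the past of $(\y,\p)$ at time $t-N$ --- together with the conditional-independence and conditional-moment properties of admissible scheduling (Definition \ref{asm:A1} and Definition \ref{def:ZMWSSI}) to show $E[\uw{w}(t-1)\uw{v}(t-1)\mid \mathscr{F}^{\y,-}_{t-N}\vee\mathscr{F}^{\p,-}_{t-N}]=p_w\,\delta_{w,v}$. This is available in the framework (it is essentially what the paper outsources to the cited uniqueness lemma for state processes), but as written your argument skips it. With that repair, and with the $\sqrt{p_{\sigma w}}$ normalizations tracked as you indicate, the proposal is sound.
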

\begin{proof}[Proof of Lemma \ref{proof:lemma1}]
   Assume that $\mathcal{S}=(\{A_i,K_i\}_{i=1}^{\pdim},C,D,\vb)$ is a minimal asLPV-SSA 
   realization of $(\y,\p)$ in innovation form. Let us define the matrices
   \[ 
      \begin{split}
      &  \tilde{F}_i=\begin{bmatrix} F_i & B_iC \\
                                  0   & A_i
                   \end{bmatrix},  \\
     & \tilde{B}_i = \begin{bmatrix} B_iD \\ K_i \end{bmatrix} \\
     & \tilde{C} = \begin{bmatrix} I_{n} & 0 \end{bmatrix} 
      \end{split}              
    \]                
    From Lemma \ref{proof:lemma-1} it follows
    that $\sum_{i=1}^{\pdim} p_i (\tilde{F}_i \otimes \tilde{F}_i)$ is stable,
    and hence by Lemma \ref{proof:lemma-2}
    \begin{equation}
     \label{proof:lemma1:eq2}
     \tilde{\x}(t)=\sum_{w \in \Sigma^{*},\sigma \in \Sigma} \sqrt{p_\sigma w} \tilde{F}_w B_{\sigma} \z_{\sigma w}^{\vb}(t)
    \end{equation}
    is absolutely convergent in the mean square sense and it
    is the unique state process of $(\{\tilde{F}_i,\tilde{B}_i\}_{i=1}^{\pdim},\tilde{C},I,\vb)$.
    %
    
    From \cite[Lemma 2 and Lemma 9]{PetreczkyBilinear} it follows that
    \begin{equation}
     \label{proof:lemma1:eq3}
     \z^{\y}_v(t)=\sum_{w \in \Sigma^{*},\sigma \in \Sigma} CA_w K_{\sigma} \z_{\sigma wv}^{\vb}(t) + D\z^{\vb}_{v}(t)
    \end{equation}
    Notice that
    \begin{equation}
    \label{proof:lemma1:eq4}
    \tilde{F}_{w}=\begin{bmatrix} F_w & \sum_{s_1,s_2 \in \Sigma^{*},\sigma_1 \in \Sigma, w=s_1\sigma s_2} F_{s_2}B_{\sigma_1}CA_{s_1}\\
     0 & A_w
     \end{bmatrix}
    \end{equation}
    and notice that
    $\hat{\x}(t)=\begin{bmatrix} \bar{\x}(t) \\  \x(t) \end{bmatrix}$,
    where $\x$ is the unique state process of the 
    asLPV-SSA $\mathcal{S}=(\{A_i,K_i\}_{i=1}^{\pdim},C,D,\vb)$
    which realizes $(\y,\p)$.
    In particular, $\bar{\x}(t)$
    \begin{equation}
    \label{proof:lemma1:eq5}
    \begin{split}
    & \bar{\x}(t)=\sum_{w \in \Sigma^{*},\sigma \in \Sigma} 
        \left( F_wB_{\sigma}D\z^{\vb}_{w\sigma}(t)  +  \right. \\ 
        & \left. \sum_{s_1,s_2 \in \Sigma^{*},\sigma_1 \in \Sigma, w=s_1\sigma s_2} F_{s_2}B_{\sigma_1}CA_{s_1}K_{\sigma} 
        \z_{\sigma w}^{\vb}(t)\right)
    \end{split}    
    \end{equation}
    From Lemma \ref{proof:lemma-2} it follows that 
    $S_3=\sum_{w \in \Sigma^{*},\sigma \in \Sigma} F_w B_{\sigma}D\z^{\vb}_{w\sigma}(t)$ is
    absolutely convergent in the mean-square sense. Hence, 
    \begin{equation}
       \label{proof:lemma1:eq6}
       \begin{split}
    & S_4:=\bar{\x}(t)-S_3= \\
    & =\sum_{w \in \Sigma^{*},\sigma_1,\in \Sigma} \left(\sum_{s_1,s_2 \in \Sigma^{*},\sigma_1 \in \Sigma, w=s_1\sigma s_2} F_{s_2}B_{\sigma_1}CA_{s_1}K_{\sigma} \z_{\sigma w}^{\vb}(t)\right) 
    \end{split}
    \end{equation}
    is absolutely convergent in the mean-square sense.
    It is known that for absolutely convergent series of elements Hilbert-spaces can be rearranged
    while preserving convergence, hence, by using \eqref{proof:lemma1:eq3}
    \begin{equation}
       \label{proof:lemma1:eq6}
     \begin{split}
    & S_4=\sum_{s_1,s_2\in \Sigma^{*} \sigma_1,\sigma \in \Sigma}
    \sqrt{p_{\sigma s_1\sigma_1s_2}} F_{s_2}B_{\sigma_1}CA_{s_1}K_{\sigma} \z_{\sigma w}^{\vb}(t) = \\
    & \sum_{s_2 \in \Sigma^{*},\sigma_1 \in \Sigma} F_{s_2}B_{\sigma_1}
      \underbrace{\left( \sum_{s_2 \in \Sigma^{*},\sigma \in \Sigma} CA_{s_2}K_{\sigma}\z_{\sigma s_2\sigma_1 s_1}^{\vb}(t)\right)}_{\z^{\y}_{\sigma_1 s_2}(t)-D\z_{\sigma_1 s_2}^{\vb}(t)} \\
    &   \sum_{s_2 \in \Sigma^{*},\sigma_1 \in \Sigma} F_{s_2}B_{\sigma_1}\z_{\sigma_1s_1}^{\y}(t)-F_{s_2} B_{\sigma_1}
    D\z_{\sigma_1 s_2}^{\vb}(t)=\\
    & \sum_{s_2 \in \Sigma^{*},\sigma_1 \in \Sigma} F_{s_2}B_{\sigma_1}\z_{\sigma_1s_1}^{\y}(t) - S_3
    \end{split}
    \end{equation}
    is absolutely convergent, and hence
    $\r(t)=\bar{\x}(t)=S_4+S_3$ is absolutely convergent. 
    Finally notice that $\r(t)=\bar{\x}(t)$ is the component of the
    unique state process $\tilde{\x}(t)$ of the asLPV-SSA
    $(\{\tilde{F}_i,\tilde{B}_i\}_{i=1}^{\pdim},\tilde{C},I,\vb)$
    and hence $\tilde{\x}$ is ZMWSII and hence so is
    $\r$. Finally, from $\tilde{\x}$ being the state process of 
    $(\{\tilde{F}_i,\tilde{B}_i\}_{i=1}^{\pdim},\tilde{C},I,\vb)$
    it follows that 
    $\r(t)$ satisfies \eqref{proof:lemma1:eq1.5}. Moreover,
    if $\bar{\r}(t)$ is a ZMWSII process which satisfies 
    \eqref{proof:lemma1:eq1.5} such that $\begin{bmatrix} \bar{\r}^T & \y^T \end{bmatrix}^T$ is ZMWSII and
    the components of $\bar{\r}(t)$ belong to $\mathcal{H}_t^{\y}$.
    Note that since $\mathcal{S}$ is a minimal asLPV-SSA realization o$(\y,\p)$ in innovation form,
    $\vb(t)=\y(t)-C\x(t)$ and the elements of $\vb$ belong to the Hilbert-space $\mathcal{H}^{\y}_{t}$. 
    Notice that from \eqref{proof:lemma1:eq3} it follows that
    $\mathcal{H}^{\y}_t$ is a subspace of the Hilbert-space $\mathcal{H}_t^{\vb}$ generated by $\{\z^{\vb}_{w}(t)\}_{w \in \Sigma^{+}}$.
    That is, $\mathcal{H}^{\y}_t=\mathcal{H}_t^{\vb}$. 
    Then $\hat{\x}=\begin{bmatrix} \bar{\r}^T & \x \end{bmatrix}^T$ is such that
    $\hat{\x}(t)$ satisfies the conditions of \cite[Lemma 10]{PetreczkyBilinear} for $\r=\vb$,
    $\begin{bmatrix} \hat{\x}^T & \vb^T \end{bmatrix}^T$ is ZMWSII. Moreover, $E[\vb(t)(\z_w^{\bar{\r}}(t))^T]=0$ as
    $\vb(t)$ is orthogonal to $\mathcal{H}_t^{\vb}=\mathcal{H}_t^{\y}$ and the components of $\z_w^{\bar{\r}}(t)$ belongs 
    $\mathcal{H}_t^{\y}$ for all $w \in \Sigma^{*}$. Finally $\hat{\x}(t+1)=\sum_{i=1}^{\pdim} (\tilde{F}_i\hat{\x}(t)+\tilde{B}_i\vb(t))\p_i(t)$. Hence, $\hat{\x}(t)$ is a state process of 
     $(\{\tilde{F}_i,\tilde{B}_i\}_{i=1}^{\pdim},\tilde{C},I,\vb)$ and hence it is unique and equals $\tilde{\x}$ and
     $\bar{\r}=\r$.

\end{proof}
\begin{proof}[Proof of Lemma \ref{gbs:finite_filt:lemma}]
 From Lemma \ref{proof:lemma1} it follows that 
 $\tilde{\x}(t)=\sum_{w \in \Sigma^{*},\sigma \in \Sigma} F_w K_{\sigma} \z_{\sigma w}^{\y}(t)$,
 where $F_i=(A_i-K_iC)$, $i=1,\ldots,\pdim$
 is absolutely convergent in the mean-square sense, hence
 $\bar{\x}(t)-\tilde{\x}(t)$ converges to zero in the mean square sense. 
 It remains to show that 
 $\tilde{\x}(t)=\x(t)$. To this end, notice that 
 $\begin{bmatrix} \tilde{\x}^T & \y^T \end{bmatrix}^T$ is ZMWSII by Lemma \ref{proof:lemma1}, 
 the elements of $\tilde{\x}$ belong to $\mathcal{H}^{\y}_t$ and it satisfies
 \[
   \begin{split}
    & \tilde{\x}(t+1)=\sum_{i=1}^{\pdim}
     ((A_i-K_iC)\tilde{\x}(t)+K_i\y(t))\p_i(t)
    \end{split} 
 \]    
 At the same time, $\begin{bmatrix} \x^T & \y^T \end{bmatrix}^T$ is ZMWSII. Note that the components of
 $\x$ belong to $\mathcal{H}^{\y}$: as it was pointed out in the proof of Lemma \ref{proof:lemma1}, 
 $\mathcal{H}^{\y}_t$ equals the Hilbert-space generated by $\{\z_w^{\e}(t)\}_{w \in \Sigma^{+}}$ and
 the components of $\x(t)$ belong to the latter Hilbert-space. 
 Hence, $\x(t)$ satisfies \eqref{gen:filt:bil:def:pred}, hence
 by Lemma \ref{proof:lemma1} $\x(t)=\bar{\x}(t)$.
\end{proof}

\begin{proof}[Proof of Theorem \ref{inv:gbs:lemma}]
    Note that we can write $\vb(t)=\y(t)-C\x(t)$ and hence the first equation of \eqref{eq:aslpv} holds, i.e., 
    $\x(t+1) = \sum_{i=1}^{\pdim}  (A_i-K_iC)\x(t)+K_i\y(t))\bmu_i(t)$. 
    Since the matrix $\sum_{i=1}^{\pdim} \bmu_i (A_i-K_iC) \otimes (A_i-K_iC)$ is stable, then by repeating the steps of the proof of \cite[Lemma1]{PetreczkyBilinear} it can be shown that
   $\x(t)=\sum_{w \in \Sigma^{*},i \in \Sigma} \sqrt{p_{iw}} A_wK_i\z^{\y}_{iw}(t)$, and hence the elements of $\x(t)$ belong to the Hilber-space generated by $\{\z^{\y}_w(t)\}_{w \in \Sigma^{+}}$.
   Note that $E[\vb(t) \mid \{\z^{\y}_w(t)\}_{w \in \Sigma^{+}}]=0$, see the proof of \cite[eq. (37), proof of Theorem 4]{PetreczkyBilinear}, hence,
   $E[\y(t) \mid \{\z^{\y}_w(t)\}_{w \in \Sigma^{+}}]=C\x(t)$ and therefore $\e(t)=\vb(t)$.
  \end{proof}

\begin{proof}[Proof of Theorem \ref{min:forw:gbs:lemma}]
   Note that $\mathcal{S}$ is minimal if and only if the observability and reachability matrices satisfy the following rank conditions $\rank~ \mathscr{O}_{n-1}(\mathcal{S})=n$ and $\rank~ \mathscr{R}_{n-1}(\mathcal{S})=n$ . 
   Note that the rows of the extended observability matrix $\mathcal{O}_{n-1}$ of the associated dLPV-SSA $\mathcal{D}_{\mathcal{S}}$ are either zero or they coincide with the rows of the observability matrix $\mathscr{O}_{n-1}(\mathcal{S})$, i.e., $\rank~ \mathscr{O}_{n-1}(\mathcal{S})=\rank~ \mathcal{O}_{n-1}$. 
   That is, $\mathcal{S}$ satisfies the observability rank condition if and only if the dLPV-SSA $\mathcal{D}_{\mathcal{S}}$ is observable.
   We will show that $\IM \mathscr{R}_{n-1}(\mathcal{S})=\IM \mathcal{R}_{n-1}$, where $\mathcal{R}_{n-1}$ is the extended controllability matrix of the dLPV-SSA $\mathcal{D}_{\mathcal{S}}$. 
   From this, it follows that $\mathcal{S}$ satisfies the reachability rank condition if and only if $\mathcal{L}_{\mathcal{S}}$ is span-reachable. 

   Now we will show that $\IM~ \mathscr{R}_{n-1}(\mathcal{S})=\IM~ \mathcal{R}_{n-1}$.
   To this end, we recall that $\x(t)$ belongs to the linear  space generated by the columns of $A_{w}K_{\sigma}$, $w \in \Sigma^{*}$, $\sigma \in \Sigma$.
   Since $B_\sigma=E[\x(t)(\z^{\y}_{\sigma}(t))^T]$, it then follows that the columns of $B_\sigma$ also belong to the linear space generated by the columns of $A_{w}K_\sigma$, $w \in \Sigma^{*}, \sigma \in \Sigma$.
   Therefore, the columns of $A_vB_\sigma$, $v \in \Sigma^{*}, \sigma \in \Sigma$ also belong to the  linear space generated by the columns of $A_{w}K_\sigma$, $w \in \Sigma^{*}, \sigma \in \Sigma$.
   In turn, it is easy to see that latter subspace equals $\IM~ \mathcal{R}_{n-1}$. 
   That is, $\IM~ A_vB_\sigma$ is a subspace of $\IM~ \mathcal{R}_{n-1}$, and therefore $\IM~ \mathscr{R}_{n-1}(\mathcal{S}) \subseteq \IM~ \mathcal{R}_{n-1}$. 
     Conversely, from \cite[eq. (37), proof of Theorem 4]{PetreczkyBilinear} it follows that $E[\x(t)\z^{\y}_{\sigma v}(t)]=\sqrt{p_w} A_vB_\sigma$, i.e., for every $w \in \Sigma^{+}$, the columns
   $E[\x(t)(\z^{\y}_{w}(t))^T]$  belong to the space generated by $A_vB_\sigma$, $\sigma \in \Sigma$, $v \in \Sigma^*$. Notice that by \cite[Theorem 2 and Remark 1]{PetreczkyRealChapter}
applied to the LSS\ $\Sigma_{\mathcal{S}}$, the latter space equals $\IM \mathscr{R}_{n-1}(\mathcal{S})$. 
   Since the elements of $\z^{\e}_{\sigma}(t)$  are limits of finite linear combinations of the rows of  $\{\z^{\y}_{w}(t)\}_{w \in \Sigma^{+}}$, it then follows that the columns of 
   $E[\x(t)(\z^{\e}(t))^T]$ are the limits of finite linear combinations of columns of $E[\x(t)(\z^{\y}_{w}(t))^T]$, $w \in \Sigma^{+}$, and hence the columns of  $E[\x(t)(\z^{\y}_{w}(t))^T]$, $w \in \Sigma^{+}$
   also belong to $\IM \mathscr{R}_{n-1}(\mathcal{S})$. 
   From \cite[Proof of Theorem 4]{PetreczkyBilinear} it follows that $K_\sigma Q_\sigma=E[\x(t)(\z^{\e}_\sigma(t))^T]$, where $Q_\sigma=E[\e(t)\e^T(t)\bmu_\sigma^2(t)]$, and hence the columns of $K_\sigma Q_\sigma$ belong to
   $\IM \mathscr{R}_{n-1}(\mathcal{S})$. Since $Q_\sigma$ is non-singular, it then follows that the columns of $K_\sigma$ belong to  $\IM \mathscr{R}_{n-1}(\mathcal{S})$. Since $\IM \mathscr{R}_{n-1}(\mathcal{S})$ is $A_\sigma$-invariant for all
   $\sigma \in \Sigma$ and $A_0=0$, it then follows that $\IM A_vK_\sigma \subseteq \IM \mathscr{R}_{n-1}$ for all $v \in \AQ^{*}$, $\sigma \in \AQ$, and thus $\IM \mathcal{R}_{n-1} \subseteq \IM \mathscr{R}_{n-1}(\mathcal{S})$.
  \end{proof}
  
  \subsection{Proof of Lemma 2}
  \begin{Definition}[Necessary Maps]
  Define, for the sequel, the following maps:
  $$\mathscr{Y}: (\ub,\p) \longmapsto \yb = \mathscr{Y}(\ub,\p)$$
  $$\mathscr{F}: \mathcal{D} \longmapsto \mathscr{F}(\mathcal{D})$$
  $$\mathscr{D}: \mathcal{D} \longmapsto \mathscr{D}(\mathcal{D})$$
  where $\mathscr{Y}$ is the input-output map, $\mathcal{D} = (\{A_i,K_i\}_{i=1}^{\pdim},C,I)$ is a dLPV-SSA and $\mathscr{F}(\mathcal{D})$ and $\mathscr{D}(\mathcal{D})$ are two transformed dLPV-SSAs with $\mathscr{F}(\mathcal{D})=(\{A_i-K_iC,K_i\}_{i=1}^{\pdim},C,I)$  and $\mathscr{D}(\mathcal{D})=(\{A_i,K_i\}_{i=1}^{\pdim},-C,I)$
  \end{Definition}
  Note that, if $\mathcal{X}$ is a dLPV-SSA representation, then it is clear that $\mathscr{D}(\mathscr{F}(\mathscr{D}(\mathscr{F}(\mathcal{X}))))=\mathcal{X}$.
  
  \begin{Lemma}
  \label{new:lem}
  Consider the following dLPV-SSA:
  $\mathcal{D} = (\{A_i,K_i\}_{i=1}^{\pdim},C,I)$ where it is a realization of the sub-Markov parameters $M_\mathcal{D} = CA_wK_\sigma$ where $w \in \Sigma^{*}$, $\sigma \in \Sigma$. Subsequently, $\mathscr{F}(\mathcal{D})$ is a realization of $M_{\mathscr{F}(\mathcal{D})} = C\Tilde{A}_wK_\sigma$ with $\Tilde{A}_i = A_iK_iC$. If a dLPV-SSA $\mathcal{D}' = (\{A'_i,K'_i\}_{i=1}^{\pdim},C',I)$ is a realization of $M_\mathcal{D}$, then the $\mathscr{F}(\mathcal{D'})$ dLPV-SSA is a realization of $M_{\mathscr{F}(\mathcal{D})}$.
  \end{Lemma}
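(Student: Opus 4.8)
The plan is to show that the sub-Markov function of $\mathscr{F}(\mathcal{D})$ is obtained from that of $\mathcal{D}$ by a \emph{realization-independent} formula: for each $w\in\Sigma^{+}$ the matrix $M_{\mathscr{F}(\mathcal{D})}(w)$ is a fixed linear combination of products of the matrices $\{M_{\mathcal{D}}(v)\}_{v\in\Sigma^{+}}$, where the words $v$ involved and the $\pm1$ coefficients depend only on $w$, and not on $\{A_i,K_i\}_{i=1}^{\pdim}$ or $C$. Granting this, the lemma is immediate: if $\mathcal{D}'$ is a realization of $M_{\mathcal{D}}$, then by definition $M_{\mathcal{D}'}=M_{\mathcal{D}}$, so substituting these common values into the universal formula yields $M_{\mathscr{F}(\mathcal{D}')}=M_{\mathscr{F}(\mathcal{D})}$, i.e.\ $\mathscr{F}(\mathcal{D}')$ is a realization of $M_{\mathscr{F}(\mathcal{D})}$; by \cite{PetreczkyLPVSS} this also says $\mathscr{F}(\mathcal{D}')$ and $\mathscr{F}(\mathcal{D})$ have the same input--output behaviour. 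The argument is insensitive to the state dimension of $\mathcal{D}'$, since sub-Markov parameters are $\ny\times\ny$ matrices regardless of $\nx$.

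To derive the universal formula, write $\tilde{A}_i:=A_i-K_iC$ for the dynamics matrices of $\mathscr{F}(\mathcal{D})=(\{\tilde{A}_i,K_i\}_{i=1}^{\pdim},C,I)$, fix $w=\sigma_1\sigma_2\cdots\sigma_n\in\Sigma^{+}$, and use \eqref{eqn:sub_markov} together with Notation \ref{not:product} to write
\[
M_{\mathscr{F}(\mathcal{D})}(w)=C\,\tilde{A}_{\sigma_n}\tilde{A}_{\sigma_{n-1}}\cdots\tilde{A}_{\sigma_2}\,K_{\sigma_1}.
\]
Distributing each factor $\tilde{A}_{\sigma_j}=A_{\sigma_j}-K_{\sigma_j}C$, a generic summand is selected by the set of positions at which the term $-K_{\sigma_j}C$ is chosen; a choice of $r-1$ such positions splits $w$ into $r$ consecutive nonempty subwords $w=w_1w_2\cdots w_r$, and regrouping the product so that each maximal block of $A$'s is flanked by a $C$ on its left and a $K$ on its right, the summand equals $(-1)^{r-1}M_{\mathcal{D}}(w_r)M_{\mathcal{D}}(w_{r-1})\cdots M_{\mathcal{D}}(w_1)$ (using $A_{\epsilon}=I_n$ for empty $A$-blocks coming from adjacent chosen positions, and $CK_{\sigma}=M_{\mathcal{D}}(\sigma)$ for degenerate boundary blocks). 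Summing over all compositions of $w$ gives
\[
M_{\mathscr{F}(\mathcal{D})}(w)=\sum_{r\ge1}\ \sum_{\substack{w_1,\dots,w_r\in\Sigma^{+}\\ w=w_1w_2\cdots w_r}}(-1)^{r-1}\,M_{\mathcal{D}}(w_r)M_{\mathcal{D}}(w_{r-1})\cdots M_{\mathcal{D}}(w_1),
\]
which reduces to $M_{\mathscr{F}(\mathcal{D})}(\sigma)=CK_{\sigma}=M_{\mathcal{D}}(\sigma)$ when $|w|=1$. Here the coefficients and the words $w_j$ depend only on $w$, which is precisely the asserted realization-independence. The same identity can alternatively be proved by induction on $|w|$, peeling off one factor $\tilde{A}_{\sigma}$ and tracking the cross term.

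The only point requiring genuine care is the bookkeeping in this expansion: the product convention of Notation \ref{not:product} reverses the order of letters, so the subwords appear in reversed order inside the matrix products, and the degenerate cases (empty $A$-blocks from two consecutive chosen positions, and the single-letter word $w=\sigma$) must be matched against the convention $A_{\epsilon}=I_n$. No analytic subtleties arise, since every quantity is a finite product of fixed-size matrices; once the universal formula is established, the conclusion of the lemma follows as in the first paragraph.
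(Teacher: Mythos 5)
Your proof is correct, and it takes a genuinely different route from the paper's. You establish a closed-form, realization-independent identity: expanding $\tilde A_{\sigma_j}=A_{\sigma_j}-K_{\sigma_j}C$ in $C\tilde A_{\sigma_n}\cdots\tilde A_{\sigma_2}K_{\sigma_1}$ and regrouping gives
\[
M_{\mathscr{F}(\mathcal{D})}(w)=\sum_{r\ge 1}\ \sum_{\substack{w_1,\dots,w_r\in\Sigma^{+}\\ w=w_1\cdots w_r}}(-1)^{r-1}M_{\mathcal{D}}(w_r)\cdots M_{\mathcal{D}}(w_1),
\]
and since the right-hand side depends only on the sub-Markov function (the products are well defined because the feedthrough is $I_{\ny}$, so all parameters are $\ny\times\ny$), $M_{\mathcal{D}'}=M_{\mathcal{D}}$ immediately forces $M_{\mathscr{F}(\mathcal{D}')}=M_{\mathscr{F}(\mathcal{D})}$. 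I checked the bookkeeping: the bijection between subsets of $\{2,\dots,n\}$ and compositions of $w$ into nonempty factors, the reversal of factor order imposed by Notation~\ref{not:product}, and the degenerate single-letter blocks all work out. The paper instead argues at the level of trajectories: it shows that the output feedback $\vb=\ub+\yb$ carries solutions of $\mathcal{D}$ to solutions of $\mathscr{F}(\mathcal{D})$ and is invertible via $\ub=\vb-\yb$, so $\mathcal{D}$ and $\mathscr{F}(\mathcal{D})$ have the same input--output map up to this feedback; combined with the fact (from the cited realization theory) that equal sub-Markov functions are equivalent to equal input--output maps, equality of $M_{\mathcal{D}}$ and $M_{\mathcal{D}'}$ transfers to the feedback-transformed systems. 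Your argument is more self-contained (it never invokes the equivalence between sub-Markov functions and input--output behaviour, in either direction) and yields an explicit transformation formula of independent interest; the paper's argument is shorter on computation and makes the feedback interpretation of $\mathscr{F}$ transparent, which is what the surrounding material (the involution $\mathscr{D}\circ\mathscr{F}\circ\mathscr{D}\circ\mathscr{F}=\mathrm{id}$ and the minimality corollary) actually uses.
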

  
  \begin{proof}[Proof of Lemma \ref{new:lem}]
     Consider that $(\x,\ub,\p,\y)$ is a solution of the system $\mathcal{D}$ it then follows that 
     \begin{align*}&\x(t+1) = \sum_{i=1}^{\pdim}(A_i\x(t)+K_i\ub(t))\p_i(t) \\
     &\y(t)=C\x(t)\end{align*}
     It can then be modified to 
     $$\x(t+1)=\sum_{i=1}^{\pdim}(A_i\x(t)+K_i(C\x(t) - C\x(t) + \ub(t)))\p_i(t)$$
     $$\x(t+1)=\sum_{i=1}^{\pdim}((A_i-K_iC)\x(t)+K_i(\yb(t) + \ub(t)))\p_i(t)$$
     Subsequently, it is safe to say that $(\x,\vb,\p,\y)$ is a solution of the system $\mathscr{F}(\mathcal{D})$ with $\vb = \yb + \ub$.
     It can also be shown, with the same demonstration, that if $(\x,\vb,\p,\yb)$ is a solution of $\mathscr{F}(\mathcal{D})$, then $(\x,\ub,\p,\yb)$ is a solution of $\mathcal{D}$, with $\ub=\vb-\yb$.
     That is, it can be said that $\mathscr{Y}_{\mathcal{D},0}(\ub,\p)=\yb=\mathscr{Y}_{\mathscr{F}(\mathcal{D}),0}(\vb,\p)$.
     Consider now that $(\Tilde{\xb},\ub,\p,\Bar{\yb})$ is a solution of $\mathcal{D}'$, it is safe to say that $(\Tilde{\xb},\Bar{\vb},\p,\Bar{\yb})$ is a solution of $\mathscr{F}(\mathcal{D}')$, with $\Bar{\vb}=u+\Bar{\yb})$.
     Lemma \ref{new:lem} assumes that $\mathcal{D}$ and $\mathcal{D}'$ are both realization of the sub-Markov parameters $M_\mathcal{D}$. This leads to the following conclusion:
     $$\mathscr{Y}_{\mathscr{F}(\mathcal{D}),0}(\vb,\p)=\mathscr{Y}_{\mathcal{D},0}(\ub,\p)=\yb=\Bar{\yb}=\mathscr{Y}_{\mathcal{D}',0}(\ub,\p)=\mathscr{Y}_{\mathscr{F}(\mathcal{D}'),0}(\bar{\vb},\p)$$
     In other words, $\mathscr{F}(\mathcal{D}')$ is a realization of $M_{\mathscr{F}(\mathcal{D})}$.
  \end{proof}
  
  \begin{Corollary}
  An dLPV-SSA representation $\Sigma$ is a minimal realization of $M_\Sigma$ if and only if $\mathscr{F}(\Sigma)$ is a minimal realization of $M_{\mathscr{F}(\Sigma)}$.
  \end{Corollary}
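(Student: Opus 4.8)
The plan is to prove the equivalence by establishing each implication in contrapositive form, in both cases transporting a hypothetical non-minimal realization along the map $\mathscr{F}$ (respectively along its inverse). Two elementary facts will be used throughout. First, both $\mathscr{F}$ and $\mathscr{D}$ preserve the state dimension of a dLPV-SSA, which is immediate from $\mathscr{F}(\mathcal{D})=(\{A_i-K_iC,K_i\}_{i=1}^{\pdim},C,I)$ and from the fact that $\mathscr{D}$ only changes the sign of $C$. Second, $\mathscr{D}$ is an involution ($\mathscr{D}\circ\mathscr{D}=\mathrm{id}$), so the identity $\mathscr{D}(\mathscr{F}(\mathscr{D}(\mathscr{F}(\mathcal{X}))))=\mathcal{X}$ recorded above says precisely that $\mathscr{D}\circ\mathscr{F}$ is an involution, whence $\mathscr{F}$ is a bijection on dLPV-SSAs with inverse $\mathscr{F}^{-1}=\mathscr{D}\circ\mathscr{F}\circ\mathscr{D}$.

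For the \emph{if} direction, suppose $\mathscr{F}(\Sigma)$ is a minimal realization of $M_{\mathscr{F}(\Sigma)}$ while $\Sigma$ is not minimal. Then there is a dLPV-SSA $\mathcal{D}'$ with $\dim\mathcal{D}'<\dim\Sigma$ and $M_{\mathcal{D}'}=M_\Sigma$. Applying Lemma \ref{new:lem} with $\mathcal{D}=\Sigma$ and this $\mathcal{D}'$ yields that $\mathscr{F}(\mathcal{D}')$ is a realization of $M_{\mathscr{F}(\Sigma)}$, and since $\mathscr{F}$ preserves dimension, $\dim\mathscr{F}(\mathcal{D}')=\dim\mathcal{D}'<\dim\Sigma=\dim\mathscr{F}(\Sigma)$, contradicting minimality of $\mathscr{F}(\Sigma)$.

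For the \emph{only if} direction I first observe that Lemma \ref{new:lem} remains valid with $\mathscr{F}$ replaced by $\mathscr{F}^{-1}=\mathscr{D}\circ\mathscr{F}\circ\mathscr{D}$, i.e. $M_{\mathcal{D}}=M_{\mathcal{D}'}$ implies $M_{\mathscr{F}^{-1}(\mathcal{D})}=M_{\mathscr{F}^{-1}(\mathcal{D}')}$. Indeed, $\mathscr{D}$ turns the sub-Markov function of a dLPV-SSA into the one obtained from it by negating its values on all nonempty words (the value on $\epsilon$ being $I$ in both cases); this operation on sub-Markov functions is bijective, so $\mathscr{D}$ preserves the relation ``realize the same sub-Markov function'', and chaining $\mathscr{D}$, then Lemma \ref{new:lem}, then $\mathscr{D}$ again gives the claim. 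Now assume $\Sigma$ is a minimal realization of $M_\Sigma$; by Lemma \ref{new:lem}, $\mathscr{F}(\Sigma)$ is a realization of $M_{\mathscr{F}(\Sigma)}$. If it were not minimal, there would be a dLPV-SSA $\mathcal{D}''$ with $\dim\mathcal{D}''<\dim\mathscr{F}(\Sigma)=\dim\Sigma$ and $M_{\mathcal{D}''}=M_{\mathscr{F}(\Sigma)}$; the inverse form of the lemma then gives $M_{\mathscr{F}^{-1}(\mathcal{D}'')}=M_{\mathscr{F}^{-1}(\mathscr{F}(\Sigma))}=M_\Sigma$, so $\mathscr{F}^{-1}(\mathcal{D}'')$ is a realization of $M_\Sigma$ of dimension $\dim\mathcal{D}''<\dim\Sigma$, contradicting minimality of $\Sigma$.

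The only point that requires care is the sign bookkeeping underlying the inverse form of the lemma, namely verifying that $\mathscr{D}$ sends realizations to realizations of the correctly sign-flipped sub-Markov function and that $\mathscr{F}^{-1}=\mathscr{D}\circ\mathscr{F}\circ\mathscr{D}$; both are immediate from the definitions of $\mathscr{F}$ and $\mathscr{D}$ and from the identity $\mathscr{D}(\mathscr{F}(\mathscr{D}(\mathscr{F}(\mathcal{X}))))=\mathcal{X}$ recorded above, so I do not anticipate a genuine obstacle — the argument is a dimension-preserving transport along the bijection $\mathscr{F}$.
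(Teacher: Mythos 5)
Your proof is correct and follows essentially the same route as the paper: transport a hypothetical smaller realization along $\mathscr{F}$ (for one direction) and along $\mathscr{D}\circ\mathscr{F}\circ\mathscr{D}$ (for the other), using Lemma \ref{new:lem}, the dimension-preservation of $\mathscr{F}$ and $\mathscr{D}$, and the identity $\mathscr{D}(\mathscr{F}(\mathscr{D}(\mathscr{F}(\mathcal{X}))))=\mathcal{X}$. If anything, you are more complete than the paper, which only writes out the direction ``$\Sigma$ minimal $\Rightarrow$ $\mathscr{F}(\Sigma)$ minimal'' and leaves the converse (and the fact that $\mathscr{D}$ preserves the ``same sub-Markov function'' relation) implicit.
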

  
  \begin{proof}
     Suppose that $\Sigma$ is minimal and $\mathscr{F}(\Sigma)$ is not. Which means that, there exists a minimal dLPV-SSA representation $\Sigma^m$ such that $dim\Sigma^m < dim\mathscr{F}(\Sigma) = dim\Sigma$.
     Define a dLPV-SSA representation $\Hat{\Sigma}^m := \mathscr{D}(\mathscr{F}(\mathscr{D}(\Sigma^m)))$. 
     It is known that $\Sigma^m$ and $\mathscr{F}(\Sigma)$ are both a realization of $M_{\Sigma^m} = M_{\mathscr{F}(\Sigma)}$, then, by applying $\mathscr{D}(\mathscr{F}(\mathscr{D}(\cdot)))$ on both sides, we will get the following:
     \begin{equation*}
         M_{\mathscr{D}(\mathscr{F}(\mathscr{D}(\Sigma^m)))} = M_{\mathscr{D}(\mathscr{F}(\mathscr{D}(\mathscr{F}(\Sigma))))} \Longrightarrow M_{\Sigma} = M_{\Hat{\Sigma}^m}
     \end{equation*}
     Which means that $\Hat{\Sigma}^m$ is a realization of $M_\Sigma$ and $dim\Sigma = dim\mathscr{F}(\Sigma) > dim\Sigma^m = dim\Hat{\Sigma}^m$,
     which implies that $\Sigma$ is not minimal, and results to a contradiction.
  \end{proof}
  
  \begin{proof}[Proof of Lemma \ref{alg:min1:lem}]
     In order to prove Lemma \ref{alg:min1:lem}, two steps are needed: \textbf{(1)} proving that $\mathcal{S}_m$ is stably invertable, and \textbf{(2)} proving that $\mathcal{S}_m$ is a realization of $(\yb,\p)$ in innovation form.
     \newline
     \textbf{(1)} From Lemma \ref{new:lem} it follows that $\mathscr{F}(\mathcal{D})$ and $\mathscr{F}(\mathcal{D}_m)$ are both a realization of $M_{\mathcal{F}(\mathcal{D})}$, this is due to the fact that $\mathcal{D}$ and $\mathcal{D}_m$ are both a realization of $M_\mathcal{D}$.
     Recall from \cite[Lemma 6]{PetreczkyNAHS}\footnote{The termenology of \cite{PetreczkyNAHS} refers to the switch systems representations not the LPV-SSA representations} that if the matrix $\sum_{i=1}^{\pdim}(\sqrt{p_i}\mathscr{A}_i^T)^T\otimes (\sqrt{p_i}\mathscr{A}_i^T)^T = \sum_{i=1}^{\pdim}p_i\mathscr{A}_i\otimes \mathscr{A}_i$ is stable then the matrix $\sum_{i=1}^{\pdim}p_i\mathscr{A}_i^m\otimes \mathscr{A}_i^m$ is also stable, where $\mathscr{A}_i$ are the state matrices of an dLPV-SSA $\Sigma_1$ and $\mathscr{A}_i^m$ are the state matrices of the minimized dLPV-SSA $\Sigma_1^m$.
     This said, recall that Algorithm \ref{alg:min1} assumes that its input $\mathcal{S}=(\{A_{\sigma},K_{\sigma}\}_{\sigma=1}^{\pdim},C,I_{\ny})$ is stably invertable. 
     It follows that the matrix $\sum_{i=1}^{\pdim} p_i (A_i-K_iC) \otimes (A_i-K_iC)$ is stable, where $(A_i-K_iC)$ are the state matrices of $\mathscr{F}(\mathcal{D})$. 
     Note that $\mathscr{F}(\mathcal{D}_m)$ is minimal due to the minimality of $\mathcal{D}_m$. 
     It follows, from \cite[Lemma 6]{PetreczkyNAHS}, that the matrix $\sum_{i=1}^{\pdim} p_i (A_i^m-K_i^mC^m) \otimes (A_i^m-K_i^mC^m)$ is stable, where $(A_i^m-K_i^mC^m)$ are the state matrices of $\mathscr{F}(\mathcal{D}_m)$. 
     Therefore, the algorithm's output $\mathcal{S}_m$ is indeed stably invertable. 
     \newline
     \textbf{(2)} Since $\mathcal{S}$ is a realization of $(\yb,\p)$ in innovation form, then there exists a process $\xb$ such that \eqref{eq:aslpv} holds with $\vb=\yb$.
     This said, let us apply the linear transformation $T$ as described in \cite[Corollary 1]{PetreczkyLPVSS} to $\mathcal{S}$. Let $\Hat{\xb}(t) = T\xb(t)$, $\Hat{A}_i = TA_i$, $\Hat{K}_i = TK_i$, for $i=1,\ldots,\pdim$, and $\Hat{C} = CT^{-1}$. Then the asLPV-SSA $\Hat{\mathcal{S}} = (\{\Hat{A}_i,\Hat{K}_i\}_{i=1}^{\pdim},\Hat{C},I,\eb)$ is also a realization of $(\yb,\p)$ in innovation form.
     Now, in order to get the Kalman decomposition, recall form \cite[Corollary 1]{PetreczkyLPVSS} that:
     \begin{equation*}
         \Hat{A}_i=\begin{bmatrix}
              A_i^m & 0 & A_i' \\
              A_i'' & A_i''' & A_i'''' \\
              0 & 0 & A_i^{uc} \\
         \end{bmatrix}, \quad
         \Hat{K}_i = \begin{bmatrix}
            K_i^m \\
            K_i' \\
            0 \\
         \end{bmatrix}, \quad
         \Hat{C} = \begin{bmatrix} 
            C^m & 0 & C' \\
         \end{bmatrix}
     \end{equation*}
     for $i=1,\ldots,\pdim$, and for suitable block matrices. 
     In particular, for all $w \in \Sigma^*$ we found:
     \begin{equation}
     \label{Aw_matrix}
         \Hat{A}_w = \begin{bmatrix}
              A_w^m & 0 & \ast \\
              \ast & A_w''' & \ast \\
              0 & 0 & A_w^{uc} \\
         \end{bmatrix}
     \end{equation}
     where $\ast$ refers to a block matrix. 
     Indeed, it can be shown that for $w=\epsilon$, $\Hat{A}_w = I$.
     Now if \eqref{Aw_matrix} holds for $w=v$, then for $w=v\sigma$:
     \begin{equation*}
         \Hat{A}_w = \begin{bmatrix}
              A_\sigma^m & 0 & \ast \\
              \ast & A_\sigma''' & \ast \\
              0 & 0 & A_\sigma^{uc} \\
         \end{bmatrix}\begin{bmatrix}
              A_v^m & 0 & \ast \\
              \ast & A_v''' & \ast \\
              0 & 0 & A_v^{uc} \\
         \end{bmatrix} = \begin{bmatrix}
              A_{\sigma v}^m & 0 & \ast \\
              \ast & A_{\sigma v}''' & \ast \\
              0 & 0 & A_{\sigma v}^{uc} \\
         \end{bmatrix}
     \end{equation*}
     Hence, by induction of the length of $w$, \eqref{Aw_matrix} holds. It the follows that:
     \begin{equation*}
         \Hat{A}_w\Hat{K}_\sigma = \begin{bmatrix}
         A_w^mK_\sigma^m \\
         \ast \\
         0 \\
         \end{bmatrix}
     \end{equation*}
     Consider the following decomposition:
     \begin{equation*}
         \Hat{\xb}(t)=T\xb(t)=\begin{bmatrix}
         \xb^m(t) \\
         \xb^c(t) \\
         \xb^{uc}(t) \\
         \end{bmatrix}
     \end{equation*}
     From Lemma \ref{proof:lemma-2}, $\Hat{\xb}(t)$ can be expressed as:
     \begin{equation*}
         \Hat{\x}(t)=\sum_{w \in \Sigma^{*},\sigma \in \Sigma} \sqrt{p_\sigma w} \Hat{A}_w \Hat{K}_{\sigma} \z_{\sigma w}^{\eb}(t)
     \end{equation*}
     It follows that:
     \begin{equation}
     \label{x:decomp}
     \Hat{\x}(t)=\sum_{w \in \Sigma^{*},\sigma \in \Sigma} \sqrt{p_\sigma w} \begin{bmatrix} A_w^m K^m_{\sigma} \z_{\sigma w}^{\eb}(t) \\
     \ast \\
     0 \\
     \end{bmatrix} = \begin{bmatrix} \sum_{w \in \Sigma^{*},\sigma \in \Sigma} \sqrt{p_\sigma w} A_w^m K^m_{\sigma} \z_{\sigma w}^{\eb}(t) \\
     \ast \\
     0 \\
     \end{bmatrix}
     \end{equation}
     From \eqref{x:decomp} it follows that $\xb^m(t)= \sum_{w \in \Sigma^{*},\sigma \in \Sigma} \sqrt{p_\sigma w} A_w^m K^m_{\sigma} \z_{\sigma w}^{\eb}(t)$. 
     Then from \cite[Lemma 3]{PetreczkyBilinear}, it follows that $\xb^m(t)$ is the state process of $\mathcal{S}_m$. 
     Finally, notice that, as $\Hat{\mathcal{S}}$ is a realization of $(\yb,\p)$, $\yb(t) = C\xb(t) + \eb(t) = \Hat{C}T\xb(t) + \eb(t) = \Hat{C}\Hat{\xb}(t) + \eb(t)$. The output can then be expressed as:
     \begin{equation}
     \label{y(t):min}
         \yb(t) = \begin{bmatrix} C^m & 0 & C' \end{bmatrix}\begin{bmatrix} \xb^m(t) \\ \xb^c(t) \\ \xb^{uc}(t) \\ \end{bmatrix} + \eb(t)
     \end{equation}
     From \eqref{x:decomp} it follows that $\xb^{uc}(t)=0$. Hence, from \eqref{y(t):min} it follows that $y(t) = C^m\xb^m(t) + e(t)$.
     Since $\xb^m$ is the state process of $\mathcal{S}_m$, it then follows that $\mathcal{S}_m$ is a realization of $(\yb,\p)$. 
     In addition, because $\mathcal{S}_m$ is stably invertable, as proven in \textbf{(1)}, then $\mathcal{S}_m$ is a realization of $(\yb,\p)$ in innovation form.
     \end{proof}

\end{document}